\numberwithin{equation}{section}
\titleformat*{\section}{\large\bfseries}
\newtheorem{thm}{Theorem}[section]
\newtheorem{lem}[thm]{Lemma}
\newtheorem{cor}[thm]{Corollary}
\newtheorem{prop}[thm]{Proposition}
\newtheorem{rem}[thm]{Remark}
\begin{document}
\begin{center}
Smoothness of integrated density of states and level statistics of the Anderson model when single site distribution is convolution with the Cauchy distribution\\~\\
Dhriti Ranjan Dolai\\
Indian Institute of Technology Dharwad \\
Dharwad  - 580011, India.\\
Email: dhriti@iitdh.ac.in
\end{center}
{\bf Abstract:} 
 In this work we consider the Anderson model on $\ell^2(\mathbb{Z}^d)$ when the single site distribution (SSD) is given by $\mu_1 * \mu_2$, where $\mu_1$ is the Cauchy distribution and $\mu_2$ is any probability measure. For this model we prove that the integrated density of states (IDS) is infinitely differentiable irrespective of the disorder strength.  
Also, we investigate the local eigenvalue statistics of this model in $d\ge 2$, without any  assumption on the localization property.\\~\\
 {\bf MSC (2020):} 81Q10, 47B80, 35J10, 35P20.\\
{\bf Keywords:} Anderson Model, random Schr\"{o}dinger operators, integrated density of states, eigenvalue statistics.
\section{Introduction}
The Anderson Model is a random Hamiltonian $H^\omega$ on $\ell^2(\mathbb{Z}^d)$ defined by
\begin{align}
\label{model}
H^\omega &=\Delta+ V^\omega,~~~\omega\in\Omega,\\
(\Delta u)(n) &=\sum_{|k-n|=1}u(k),~ u=\{u(n)\}_{n\in\mathbb{Z}^d}\in\ell^2(\mathbb{Z}^d),\nonumber\\
(V^\omega u)(n) &=\omega_nu(n) \nonumber,
\end{align}
where $\{\omega_n\}_{n\in\mathbb{Z}^d}$ are i.i.d real random variables whose common distribution is $\mu=\mu_1*\mu_2$.  Here $\mu_1$ be the Cauchy distribution whose density is given by $d\mu_1(x)=\frac{\lambda}{\pi(x^2+\lambda^2)}dx,~\lambda>0$ and $\mu_2$ can be any probability measure on $\mathbb{R}$. Consider the probability space $\big(\mathbb{R}^{\mathbb{Z}^d}, \mathcal{B}_{\mathbb{R}^{\mathbb{Z}^d}}, \mathbb{P} \big)$, where $\mathbb{P}=\underset{n\in\mathbb{Z}^d}{\otimes}\mu$ is constructed via the Kolmogorov theorem. We refer to this probability space as $\big(\Omega, \mathcal{B}_\Omega, \mathbb{P}\big)$ and denote $\omega=(\omega_n)_{n\in\mathbb{Z}^d}\in \Omega$.
The operator $\Delta$ is known as the discrete Laplacian and the potential $V^\omega$ is the multiplication operator on $\ell^2(\mathbb{Z}^d)$ by the sequence $\{\omega_n\}_{n\in\mathbb{Z}^d}$. We
note that the operators $\{H^\omega\}_{\omega\in\Omega}$ are self-adjoint and have common core domain consisting of vectors with finite support.
It is well known (see \cite[Theorem 3.9]{Kir}) that  the spectrum of the operator $H^\omega$ is full real line i.e $\sigma(H^\omega)=\mathbb{R}$ a.e $\omega$, as the support of $\mu$, the single site distribution (SSD) is $\mathbb{R}$.  \\~\\
Denote $\chi_{_L}$ to be  the orthogonal projection onto $\ell^2(\Lambda_L)$. Here $\Lambda_L\subset \mathbb{Z}^d$ denote the cube center at origin of side length $2L+1$, namely 
$$\Lambda_L=\big\{ n=(n_1,n_2,\cdots, n_d)  \in\mathbb{Z}^d : |n_i|\leq L,~1\leq i\leq d\big\}.$$
We define the matrix $H^\omega_L$ of $(2L+1)^d$ as 
\begin{equation}
\label{restr}
H^\omega_L=\Delta_L+V^\omega_L,~\Delta_L=\chi_{_L}\Delta\chi_{_L} ,~V^\omega_L=\chi_{_L}V^\omega\chi_{_L}.
\end{equation}
Since the spectrum of $H^\omega_L$  consists of discrete eigenvalues we can define the eigenvalue counting function as
\begin{equation}
\label{count}
\mathcal{N}_L^\omega(E)=\#\bigg\{j: E_j\leq E,~E_j\in\sigma(H^\omega_L)\bigg\},~~E\in\mathbb{R}.
\end{equation}
Let $\mathcal{N}(\cdot)$ is the integrated density of states (IDS) and $\nu(\cdot)$ denote the density of states measure (DOSm) of $H^\omega$, then the definition of the IDS will give
\begin{equation}
\label{ids}
\lim_{L\to\infty}\frac{\mathcal{N}_L^\omega(E)}{(2L+1)^d}=\mathcal{N}(E)~a.e~\omega~~\text{and}~~
\mathcal{N}(E)=\nu(-\infty, E],~~E\in\mathbb{R}.
\end{equation}
Since the single site distribution (SSD) is given by $\mu=\mu_1*\mu_2$ and $\mu_1$ is the Cauchy distribution then it is easy to verify that $\mu$ is absolutely continuous 
w.r.t the Lebesgue measure on $\mathbb{R}$ with bounded density. Now the Wegner estimate (given in (\ref{wm})) will ensure that $\nu$, the density of states measure (DOSm) is also absolutely continuous w.r.t the Lebesgue measure and it has a bounded density say $\rho$, i.e $d\nu(x)=\rho(x)dx$ or in  other words $\rho(x)=N'(x)$ a.e $x$ (w.r.t Lebesgue measure). The density $\rho(\cdot)$ of the measure $\nu(\cdot)$ is known as the density of states function (DOSf) of $H^\omega$. \\~\\
We also define another random Schr\"{o}dinger operator $h^{\omega_2}$  on $\ell^2(\mathbb{Z}^d)$ as
\begin{equation}
\label{h}
(h^{\omega_2}u)(n)=\sum_{|k-n|=1}u(k)+\omega_{2,n} ~u(n),~~\{u(n)\}_{n\in\mathbb{Z}^d}\in\ell^2(\mathbb{Z}^d),
\end{equation}
where $\{\omega_{2,n}\}_{n\in\mathbb{Z}^d}$ are i.i.d real random variables distributed by $\mu_2$. Here also, we consider the product probability space  
$\bigg((supp~\mu_2)^{\mathbb{Z}^d}, \mathcal{B}, \underset{n\in\mathbb{Z}^d}{\otimes}\mu_2\bigg)$
and denote $\omega_2=(\omega_{2,n})_{n\in\mathbb{Z}^d}\in (supp~\mu_2)^{\mathbb{Z}^d}$ .\\~\\
With all these notations in place, we state our main results.
\begin{thm}
\label{thm1}
The Fourier transformation of $\nu$, the density of states measure (DOSm) of $H^\omega$ is given by
\begin{equation}
\label{ft}
\hat{\nu}(t)=e^{-\lambda |t|} ~\mathbb{E}\bigg( \big\langle e_0, e^{-ith^{\omega_2}} e_0 \big\rangle\bigg),
\end{equation}
here $\{e_n\}_{n\in\mathbb{Z}^d}$ denote the standard basis of $\ell^2({\mathbb{Z}^d})$ and $h^{\omega_2}$ as in  (\ref{h}).
\end{thm}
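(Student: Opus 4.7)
The plan is to exploit the convolution structure $\mu=\mu_1*\mu_2$ by averaging first over the Cauchy component $\omega_1$ and only then over $\omega_2$. The starting point is the standard spectral-theoretic identity $\hat\nu(t)=\Ex[\dprod{e_0}{e^{-itH^\omega}e_0}]$, which follows from the definition of the DOSm together with translation invariance. Realizing $\omega_n=\omega_{1,n}+\omega_{2,n}$ with $(\omega_{1,n})_n$ and $(\omega_{2,n})_n$ independent i.i.d.\ families distributed by $\mu_1$ and $\mu_2$ respectively, and writing $V^{\omega_1}$ for the multiplication operator on $\ell^2(\Znu)$ by $\{\omega_{1,n}\}$, we obtain the operator decomposition $H^\omega=h^{\omega_2}+V^{\omega_1}$.

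The heart of the argument is the identity
\[
\Ex_{\omega_1}\bkt{\dprod{e_0}{e^{-it(A+V^{\omega_1})}e_0}} = e^{-\lambda|t|}\,\dprod{e_0}{e^{-itA}e_0},
\]
valid for every self-adjoint operator $A$ on $\ell^2(\Znu)$ independent of $\omega_1$. I would prove this via Trotter's product formula. Setting $s=t/n$, inserting complete sets of basis vectors between the $n$ factors of $(e^{-isA}e^{-isV^{\omega_1}})^n$ and using that $V^{\omega_1}$ is diagonal in the basis $\{e_m\}$, one obtains
\[
\dprod{e_0}{(e^{-isA}e^{-isV^{\omega_1}})^n e_0}=\sum_{m_1,\ldots,m_{n-1}}e^{-is\sum_{j=0}^{n-1}\omega_{1,m_j}}\prod_{j=1}^{n}K_s(m_{j-1},m_j),
\]
where $m_0=m_n=0$ and $K_s(a,b)=\dprod{e_a}{e^{-isA}e_b}$.

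For any fixed path $(m_0,\ldots,m_{n-1})$, letting $k_m$ denote the number of visits to site $m$ (so $\sum_m k_m=n$), the independence of $\{\omega_{1,m}\}_m$ together with $\hat\mu_1(r)=e^{-\lambda|r|}$ yields
\[
\Ex_{\omega_1}\bkt{e^{-is\sum_{j=0}^{n-1}\omega_{1,m_j}}}=\prod_m\hat\mu_1(k_m s)=\prod_m e^{-\lambda k_m|s|}=e^{-\lambda n|s|}=e^{-\lambda|t|},
\]
which is independent of the path. The remaining path sum telescopes to $\dprod{e_0}{(e^{-isA})^n e_0}=\dprod{e_0}{e^{-itA}e_0}$, so each Trotter approximant, after averaging over $\omega_1$, already equals the claimed right-hand side identically in $n$. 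Since all quantities involved have modulus at most one, dominated convergence lets me pass to $n\to\infty$ and obtain the identity. Applying it with $A=h^{\omega_2}$ for a.e.\ $\omega_2$ and then averaging over $\omega_2$ produces \eqref{ft}.

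The only technical point I expect to be nontrivial is justifying the Trotter product formula for the (possibly unbounded) operators $h^{\omega_2}$ and $V^{\omega_1}$; this requires essential self-adjointness of $h^{\omega_2}+V^{\omega_1}=H^\omega$ on a common core for both summands, which is standard for lattice Schr\"odinger operators since the finitely supported vectors form such a core and $H^\omega$ is essentially self-adjoint there.
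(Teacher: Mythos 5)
Your proposal is correct and rests on exactly the same mechanism as the paper's proof: split the potential as $V^{\omega_1}+V^{\omega_2}$, apply the Trotter product formula, expand the approximant over paths in the lattice, and observe that because $\hat\mu_1(r)=e^{-\lambda r}$ for $r \ge 0$ the Cauchy average contributes the path-independent factor $\prod_m e^{-\lambda k_m |s|}=e^{-\lambda|t|}$, after which the path sum reassembles into $\langle e_0,e^{-itA}e_0\rangle$. The only divergence is that you run the argument directly on the infinite-volume operator via $\hat\nu(t)=\mathbb{E}\big(\langle e_0,e^{-itH^\omega}e_0\rangle\big)$ — which obliges you to invoke Trotter for unbounded operators and, more delicately, to justify interchanging $\mathbb{E}_{\omega_1}$ with the now-infinite path sum (this needs absolute convergence, available from the rapid off-diagonal decay of $e^{-isA}$ but not addressed in your sketch) — whereas the paper performs the identical computation on the finite matrices $H^\omega_L$, where every sum is finite, and then obtains \eqref{ft} by letting $L\to\infty$ using the weak convergence of the local density of states measures $\nu_L$ to $\nu$.
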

\noindent Since $\nu(\cdot)$, the DOSm of $H^\omega$ is absolutely continuous, i.e $d\nu(x)=\rho(x) dx$, then the properties of the Fourier transform of convolution will give the differentiability of $\rho$.
\begin{cor}
\label{difernblty}
The density of states function (DOSf )  $\rho(x)$ of $H^\omega$ can be written as
\begin{equation}
\label{conv}
\rho(x)=(g*\nu_2)(x),
\end{equation}
here $\nu_2$ is the DOSm of $h^{\omega_2}$, defined by (\ref{h}) and $g(x)=\frac{1}{\pi}\frac{\lambda}{x^2+\lambda^2},~\lambda>0$ is the density of the Cauchy distribution.  Now the infinite differentiability of $\rho$ is immediate from $(\ref{conv})$.
\end{cor}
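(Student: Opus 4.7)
The plan is to read the corollary directly off Theorem \ref{thm1} by recognizing both factors of $\hat{\nu}(t)$ as Fourier transforms and then invoking the convolution theorem. First I recall the standard identity $\hat{g}(t)=e^{-\lambda|t|}$ for the Cauchy density $g(x)=\frac{1}{\pi}\frac{\lambda}{x^2+\lambda^2}$, which is a one-line residue computation. Next I identify the second factor in Theorem \ref{thm1} as $\hat{\nu}_2(t)$: by the standard characterization of the density of states measure of an ergodic random Schr\"odinger operator, one has $\int f\,d\nu_2=\mathbb{E}\big(\langle e_0,f(h^{\omega_2})e_0\rangle\big)$ for every bounded Borel function $f$, and choosing $f(x)=e^{-itx}$ gives $\hat{\nu}_2(t)=\mathbb{E}\big(\langle e_0,e^{-ith^{\omega_2}}e_0\rangle\big)$. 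Theorem \ref{thm1} therefore reads
\begin{equation*}
\hat{\nu}(t)=\hat{g}(t)\,\hat{\nu}_2(t).
\end{equation*}

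In the second step I convert this back to the spatial side. Since $g\in L^1(\RR)\cap L^\infty(\RR)$ and $\nu_2$ is a finite Borel measure on $\RR$, the convolution $(g*\nu_2)(x)=\int_{\RR} g(x-y)\,d\nu_2(y)$ is a well-defined bounded continuous function whose Fourier transform is precisely $\hat{g}(t)\hat{\nu}_2(t)$. The introduction has already established via the Wegner estimate that $\nu$ is absolutely continuous with bounded density $\rho$, so $\hat{\nu}(t)=\hat{\rho}(t)$. Fourier uniqueness therefore yields the (Lebesgue-a.e.)\ identity $\rho(x)=(g*\nu_2)(x)$, which is the claimed representation (\ref{conv}).

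Finally, smoothness of $\rho$ is automatic from this representation. Every derivative $g^{(k)}$ of the Cauchy density is a bounded rational function, so differentiating under the integral sign, justified by dominated convergence with dominant $\|g^{(k)}\|_\infty$ and the finite total mass of $\nu_2$, yields
\begin{equation*}
\rho^{(k)}(x)=\int_{\RR} g^{(k)}(x-y)\,d\nu_2(y),\qquad k\ge 0,
\end{equation*}
so $\rho\in C^\infty(\RR)$, with no restriction on the disorder parameter $\lambda$ or on $\mu_2$.

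There is no real obstacle here; Theorem \ref{thm1} is doing all the work. The only point that requires a line of explanation is the identification of $\mathbb{E}\big(\langle e_0,e^{-ith^{\omega_2}}e_0\rangle\big)$ with $\hat{\nu}_2(t)$, which is a standard consequence of the ergodic description of the DOSm already implicit in (\ref{ids}) applied to $h^{\omega_2}$ in place of $H^\omega$.
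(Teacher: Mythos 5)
Your proposal is correct and follows essentially the same route as the paper: recognize the right-hand side of (\ref{ft}) as the product $\hat{g}(t)\hat{\nu}_2(t)$, invert the Fourier transform to obtain $\rho=g*\nu_2$, and differentiate under the integral sign using the boundedness of the derivatives of the Cauchy density. You merely supply details the paper leaves implicit (the Pastur--Shubin identification of $\mathbb{E}\big(\langle e_0,e^{-ith^{\omega_2}}e_0\rangle\big)$ with $\hat{\nu}_2(t)$ and the dominated-convergence justification), which is a welcome but not substantively different elaboration.
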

\noindent Let's define the two functions spaces $\mathcal{F}_{ac}$ and $\mathcal{F}_{conv}$ as
\begin{equation*}
\mathcal{F}_{ac}:=\bigg\{h\in L^1(\mathbb{R}):~h~\text{is the DOSf of} ~H^\omega ~\text{with absolutely continuous SSD} \bigg\}.
\end{equation*}
\begin{equation*}
\begin{split}
& \mathcal{F}_{conv}:=\bigg\{ \rho \in L^1(\mathbb{R}):~\rho~\text{ is the DOSf of}~H^\omega~\text{with SSD} ~\mu=\mu_1*\mu_2, \\
&\qquad \qquad  \qquad~\text{here}~\mu_1~\text{is the Cauchy distribution with parameter}~ \lambda>0 \\
&\qquad \qquad \qquad ~~ \text{and}~\mu_2~\text{is any probability measure}\bigg\}.
 \end{split}
\end{equation*}
From the above definitions it is clear that $\mathcal{F}_{conv}\subset \mathcal{F}_{ac}\subset L^1(\mathbb{R})$. Also, as a corollary to the above theorem we observe that any DOSf  of $H^\omega$ with absolutely continuous SSD can be approximate by a sequence of infinite differentiable DOSfs, in $L^1(\mathbb{R})$.
\begin{cor}
\label{appro}
The space  $\mathcal{F}_{conv}$ is dense subset of $\mathcal{F}_{ac}$ in $L^1(\mathbb{R})$ norm and also we have $\mathcal{F}_{conv}\subset C^\infty (\mathbb{R})$.
\end{cor}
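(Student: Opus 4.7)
The plan is to use Corollary \ref{difernblty} as the key structural tool, combined with the fact that the Cauchy densities $g_\lambda(x) = \tfrac{\lambda}{\pi(x^2+\lambda^2)}$ form an approximate identity as $\lambda \to 0^+$.

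For the inclusion $\mathcal{F}_{conv}\subset C^\infty(\mathbb{R})$, I would simply invoke Corollary \ref{difernblty}: any $\rho \in \mathcal{F}_{conv}$ has the form $\rho = g*\nu_2$, and since $g$ is $C^\infty$ with all derivatives bounded and integrable, differentiating under the convolution integral gives $\rho^{(k)}(x) = (g^{(k)} * \nu_2)(x)$ for every $k$, which is well defined and continuous because $\nu_2$ is a finite measure. This is the easy half.

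For density, fix $h\in \mathcal{F}_{ac}$ and let $\mu^{ac}$ be the absolutely continuous SSD whose Anderson model produces $h$ as its DOSf. For each $\lambda>0$, consider the Anderson model whose SSD is $\mu_1^{(\lambda)}*\mu^{ac}$, where $\mu_1^{(\lambda)}$ is Cauchy with parameter $\lambda$; this SSD is of the form required to land in $\mathcal{F}_{conv}$ (with the choice $\mu_2=\mu^{ac}$). By Corollary \ref{difernblty} applied to this model, its DOSf is $\rho_\lambda = g_\lambda * h$, so $\rho_\lambda \in \mathcal{F}_{conv}$. It remains to show $\rho_\lambda \to h$ in $L^1(\mathbb{R})$ as $\lambda \to 0^+$.

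The final step is the standard approximate identity argument: the family $\{g_\lambda\}_{\lambda>0}$ satisfies $\int g_\lambda=1$, $g_\lambda\ge 0$, and for each fixed $\delta>0$, $\int_{|x|>\delta} g_\lambda(x)\,dx \to 0$ as $\lambda \to 0^+$. Hence $\|g_\lambda * h - h\|_{L^1}\to 0$ for every $h\in L^1(\mathbb{R})$, by the usual reduction to continuous compactly supported functions (dense in $L^1$) together with dominated convergence. I do not foresee a genuine obstacle here; the only thing to be mindful of is that $g_\lambda$ is not compactly supported and has only polynomial decay, so one checks the tail estimate explicitly rather than appealing to a mollifier with compactly supported $g_\lambda$. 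This combined with the first paragraph completes the proof.
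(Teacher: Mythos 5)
Your proposal is correct and follows essentially the same route as the paper: the inclusion $\mathcal{F}_{conv}\subset C^\infty(\mathbb{R})$ is read off from the convolution identity (\ref{conv}), and density is obtained by taking $\mu_2$ to be the absolutely continuous SSD producing $h$, so that the resulting DOSf is $g_\lambda * h$, and then letting $\lambda\to 0^+$ via the standard approximate identity argument. The only difference is that you spell out the approximate identity step (including the tail estimate for the non-compactly-supported Cauchy kernel), which the paper leaves implicit.
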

\noindent We can also apply our method to show the regularity properties of expected empirical spectral distribution (EESD) of certain kind of random matrices.\\~\\
 Let $A_N=\big(a_{i j} \big)_{1\leq i, j\leq N}$ is a symmetric matrix of order $N$ whose elements are real random variables. Let $D_N=\text{diag}\{d_i\}_{1\leq  i\leq N}$ be a diagonal matrix of order $N$ and its diagonal entries  $\{d_i\}_{1\leq i\leq  N}$ are i.i.d random variables distributed by the Cauchy distribution with parameter $\lambda>0$.\\
Let $F_N(\cdot)$ denote the empirical spectral distribution (ESD) of $A_N$ and $\tilde{F}_N(\cdot)$ denote the same for the matrix $A_N+D_N$. Therefore, we set
\begin{equation}
\label{esd}
\begin{split}
F_N(x):&=\frac{\#\{j: E_j\leq x,~E_j\in\sigma(A_N)\}}{N},~x\in\mathbb{R}\\
\tilde{F}_N(x):&=\frac{\#\{j: E_j\leq x,~E_j\in\sigma(A_N+D_N)\}}{N},~x\in\mathbb{R}.
\end{split}
\end{equation}
Let $L_N(\cdot)$ and $\tilde{L}_N(\cdot)$ denote the measures corresponding to the expected  empirical spectral distribution of $A_N$ and $A_N+D_N$, respectively and they are given by
\begin{equation}
\label{eesd}
\begin{split}
L_N(-\infty, x]:=\mathbb{E}\big(F_N(x)\big)~~\text{and}~~\tilde{L}_N(-\infty, x]:=\mathbb{E}\big(\tilde{F}_N(x)\big).
\end{split}
\end{equation}
\begin{cor}
\label{eesd-conv}
Let $A_N$ and $D_N$ are two independent random matrices as defined above. Then the expected empirical spectral distribution (EESD) of $A_N+D_N$ is infinite differentiable and it is given by
\begin{equation}
\label{conv-eesd}
\frac{d^k\tilde{\rho}_N}{dx^k}(x)=\bigg(\frac{d^k g}{dx^k}*L_N \bigg)(x)~\forall~k\in\mathbb{N}\cup\{0\},~\tilde{\rho}_N(x)=\frac{d}{dx}\mathbb{E}(\tilde{F}_N(x)),
\end{equation}
here the measure $L_N(\cdot)$ is given in (\ref{eesd}) and $g(x)=\frac{1}{\pi}\frac{\lambda}{x^2+\lambda^2},~\lambda>0$.
\end{cor}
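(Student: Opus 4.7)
The argument is the finite-dimensional analogue of Theorem \ref{thm1}: one computes the Fourier transform of $\tilde L_N$, shows it factors as $e^{-\lambda|t|}$ times the Fourier transform of $L_N$, and then reads off the conclusion from Fourier inversion followed by differentiation under the integral. Concretely I begin from
\begin{equation*}
\widehat{\tilde L_N}(t) = \int_{\mathbb{R}} e^{-itx}\,d\tilde L_N(x) = \frac{1}{N}\,\mathbb{E}\bigl[\operatorname{tr}(e^{-it(A_N+D_N)})\bigr].
\end{equation*}
Using independence of $A_N$ and $D_N$, I condition on $A_N$ and integrate out the Cauchy diagonal first. The core of the proof is the matrix-valued identity
\begin{equation*}
\mathbb{E}_{D_N}\bigl[e^{-it(A_N+D_N)}\bigr] = e^{-\lambda|t|}\,e^{-itA_N}.
\end{equation*}

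To establish this identity I combine Stone's formula
\begin{equation*}
e^{-itH} = \lim_{\varepsilon\downarrow 0}\frac{1}{2\pi i}\int_{\mathbb{R}} e^{-itE}\bigl[(H-E-i\varepsilon)^{-1}-(H-E+i\varepsilon)^{-1}\bigr]\,dE
\end{equation*}
with the matrix Cauchy resolvent identity
\begin{equation*}
\mathbb{E}_{D_N}\bigl[(A_N+D_N-z)^{-1}\bigr] = \bigl(A_N-z-i\lambda\,\operatorname{sgn}(\operatorname{Im} z)\bigr)^{-1},\qquad \operatorname{Im} z\neq 0.
\end{equation*}
The latter is proved by integrating over one Cauchy variable $\omega_j$ at a time: for $\operatorname{Im} z>0$ and the remaining $\omega_k$ fixed, the map $\omega_j\mapsto(A_N+D_N-z)^{-1}$ extends analytically and boundedly to the half-plane $\{\operatorname{Im}\omega_j<0\}$ (invertibility is seen by pairing $\langle v,(A_N+D_N-z)v\rangle$ and observing that the accumulated negative imaginary parts on the diagonal cannot cancel the strictly positive $\operatorname{Im} z$), so closing the Cauchy contour downward picks up only the density's pole at $\omega_j=-i\lambda$ and replaces $\omega_j$ by $-i\lambda$. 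Iterating this in each of the $N$ coordinates produces the displayed resolvent identity. Plugging it into Stone's formula the right-hand side collapses, by the spectral theorem for $A_N$, to $e^{-\lambda|t|}e^{-itA_N}$.

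Taking the trace and then the expectation over $A_N$ yields
\begin{equation*}
\widehat{\tilde L_N}(t) = e^{-\lambda|t|}\,\widehat{L_N}(t) = \widehat{g}(t)\,\widehat{L_N}(t) = \widehat{g*L_N}(t),
\end{equation*}
where $g$ is the Cauchy density with parameter $\lambda$. Uniqueness of the Fourier transform on finite measures gives $\tilde L_N = g*L_N$, so $\tilde\rho_N(x) = (g*L_N)(x)$. Since $g\in C^\infty(\mathbb{R})$ with $g^{(k)}\in L^1(\mathbb{R})$ for every $k\in\mathbb{N}\cup\{0\}$, differentiation $k$ times under the convolution is permitted and produces the stated formula $\tfrac{d^k\tilde\rho_N}{dx^k} = \tfrac{d^kg}{dx^k}*L_N$, giving the infinite differentiability.

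The main obstacle is the matrix Cauchy resolvent identity, since $A_N$ and $D_N$ do not commute in general. The iterated contour-integration approach sidesteps the non-commutativity, but at each step one must check that the non-self-adjoint matrix obtained by replacing already integrated real variables with $-i\lambda$ remains invertible for $z$ in the upper half-plane; this follows from a direct imaginary-part computation on the diagonal. Everything else is routine Fourier analysis, running in parallel with the proofs of Theorem \ref{thm1} and Corollary \ref{difernblty}.
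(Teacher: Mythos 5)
Your proof is correct, but it takes a genuinely different route from the paper. The paper establishes the key factorization $\hat{\tilde{L}}_N(t)=e^{-\lambda|t|}\hat{L}_N(t)$ by the Trotter product formula: it expands $\big(e^{-i\frac{t}{m}A_N}e^{-i\frac{t}{m}D_N}\big)^m$ in the standard basis, observes that in each term the Cauchy variables enter only through a phase $e^{-i\frac{t}{m}\sum_k\alpha_k d_k}$ with nonnegative integer weights summing to $m$, and uses the characteristic function $\mathbb{E}\big(e^{-is d_k}\big)=e^{-\lambda|s|}$ together with independence so that the expectation of every term carries the same factor $\prod_k e^{-\lambda\frac{\alpha_k}{m}|t|}=e^{-\lambda|t|}$, which then pulls out of the sum and the $m\to\infty$ limit. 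You instead prove the stronger matrix identity $\mathbb{E}_{D_N}\big[(A_N+D_N-z)^{-1}\big]=\big(A_N-z-i\lambda\,\mathrm{sgn}(\mathrm{Im}\,z)\big)^{-1}$ by iterated contour integration (the classical Lloyd-model argument), and then transfer it to $\mathbb{E}_{D_N}\big[e^{-it(A_N+D_N)}\big]=e^{-\lambda|t|}e^{-itA_N}$ via Stone's formula. Your route requires the extra care you correctly flag --- invertibility and the uniform bound $\|(A_N+D_N-z)^{-1}\|\le (\mathrm{Im}\,z)^{-1}$ for the non-self-adjoint matrices obtained after partial substitution $d_j\mapsto -i\lambda$, which follows from the numerical-range computation you sketch --- plus routine justification of the interchanges of $\mathbb{E}_{D_N}$ with the $E$-integral and the $\varepsilon\downarrow 0$ limit. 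What it buys is a reusable and quantitatively stronger statement (the averaged resolvent itself, hence the Stieltjes transform of $\tilde{L}_N$ as an explicit shift of that of $L_N$), whereas the paper's Trotter argument is more elementary, using only the stability of the Cauchy characteristic function and no complex-analytic input. The concluding Fourier-inversion and differentiation-under-the-convolution steps coincide with the paper's.
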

\begin{rem}
The above corollary also guarantee that if the limit of $\mathbb{E}\big(F_N(\cdot)\big)$ exists, as $N$ gets larger, then the same is true for $\mathbb{E}\big(\tilde{F}_N(\cdot)\big)$ and in that case the limit of $\mathbb{E}\big(\tilde{F}_N(\cdot)\big)$ will be infinite differentiable.\\
In other words any real symmetric random matrix of the form $A_N+D_N$ always have infinite smooth expected empirical spectral distribution (EESD), as long as $A_N,~D_N$ are independent and $D_N$ is a diagonal matrix whose entries are i.i.d Cauchy. We also note that there is no restriction on the distributions of the elements of $A_N$.
\end{rem}
\noindent Now we consider the rescale matrix
\begin{equation}
\label{rescale}
H^\omega_{\gamma,E,L}=(2L+1)^\gamma \big(H^\omega_L-E\big),~\gamma>0,~E\in\mathbb{R}.
\end{equation}
 We intend to study the limit of the sequence of random measures $\{\mu^\omega_{E, L}(\cdot)\}_L$ as $L$ gets large, where the random measure is defined as
\begin{align}
\label{meas1}
\mu^\omega_{E,L}\big(\cdot\big) &=\frac{1}{(2L+1)^\beta}Tr\bigg(E_{H^\omega_{\gamma, E,L}}\big(\cdot\big) \bigg),~~\gamma+\beta=d\nonumber\\
                                 &=\frac{1}{(2L+1)^\beta}\sum_{E_j\in\sigma(H^\omega_L)}\delta_{_{(2L+1)^\gamma(E_j-E)}}\big(\cdot\big),~~E\in\mathbb{R}.
\end{align}
In the above $E_A(\cdot)$ denote the spectral measure of a self adjoint  operator $A$ and $\delta_b(\cdot)$ be the Dirac measure at the point $b$.\\~\\
\noindent Now we will describe the convergence of the sequence of random measures $\{\mu^\omega_{E, L}(\cdot)\}_L$ as defined in (\ref{meas1}), associated with eigenvalues of the matrix $H^\omega_L$. We observe that the total mass $\mu^\omega_{E,L}(\mathbb{R})$ increases to infinity as $L$ gets larger, but the Wegner estimate (\ref{wm}) imply $\displaystyle\sup_{L}\mathbb{E}\big(\mu^\omega_{E,L}(K)\big)<\infty$ for each compact set $K\subset{\mathbb{R}}$. Therefore, we will talk about the vague convergence of the sequence of random measures $\{\mu^\omega_{L,E}(\cdot)\}_L$ and try to find its limit.
\begin{thm}
\label{eig-sta}
Consider the random measure $\mu^\omega_{E, L}(\cdot)$ defined in (\ref{meas1}) with the parameters $0<\gamma< \frac{d-1}{2d}$ and $\beta=d-\gamma$, then for $d\ge 2$ we have
\begin{equation}
\label{weak-conv}
\mu^\omega_{E, L}(\cdot)\xrightarrow[L\to\infty]{vaguely}\rho(E)\mathcal{L}(\cdot)~~a.e~\omega,~~\text{whenever}~\rho(E)>0,
\end{equation}
where $\mathcal{L}(\cdot)$ denote the Lebesgue measure on $\mathbb{R}$. 
\end{thm}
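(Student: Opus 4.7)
Vague convergence against an arbitrary $f\in C_c(\RR)$ follows once it is established on a countable dense family, so I fix such an $f$ and aim to prove $\mu^\omega_{E,L}(f)\to\rho(E)\int_{\RR} f(x)\,dx$ almost surely. Setting $\phi_L(x):=f\bigl((2L+1)^\gamma(x-E)\bigr)$ one has $\mu^\omega_{E,L}(f)=(2L+1)^{-\beta}\,\mathrm{Tr}\,\phi_L(H^\omega_L)$, and the proof splits into (i) convergence of the mean and (ii) vanishing of the fluctuation via a variance estimate combined with Borel--Cantelli.

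\textbf{Convergence of the mean.} By translation invariance of the law of the infinite-volume operator, $\EE\langle e_n,\phi_L(H^\omega)e_n\rangle=\int\phi_L\,d\nu$ for every $n\in\ZZ^d$. The change of variable $y=(2L+1)^\gamma(x-E)$ gives
\[
\frac{|\Lambda_L|}{(2L+1)^\beta}\int\phi_L\,d\nu\;=\;\int f(y)\,\rho\bigl(E+(2L+1)^{-\gamma}y\bigr)\,dy,
\]
which tends to $\rho(E)\int f$ by dominated convergence and the continuity of $\rho$ at $E$ (guaranteed by Corollary \ref{difernblty}, since $\rho\in C^\infty(\RR)$). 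It then remains to show that $\EE[\mathrm{Tr}\,\phi_L(H^\omega_L)]-|\Lambda_L|\int\phi_L\,d\nu = o\bigl((2L+1)^\beta\bigr)$. I would control this finite-volume correction by splitting $\Lambda_L$ into a shell of width $r$ around $\partial\Lambda_L$, where the Wegner estimate yields contribution $O(L^{d-1}r\cdot L^{-\gamma})$, and an interior, where a geometric resolvent identity combined with Helffer--Sj\"ostrand calculus and Combes--Thomas exponential decay of $(z-H^\omega)^{-1}$ bounds the error from replacing $H^\omega$ by $H^\omega_L$.

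\textbf{Variance.} To kill the fluctuation I apply the Efron--Stein inequality
\[
\mathrm{Var}\bigl(\mathrm{Tr}\,\phi_L(H^\omega_L)\bigr)\le\frac12\sum_{n\in\Lambda_L}\EE\bigl[\bigl(\mathrm{Tr}\,\phi_L(H^\omega_L)-\mathrm{Tr}\,\phi_L(\tilde H^{(n)}_L)\bigr)^2\bigr],
\]
where $\tilde H^{(n)}_L$ denotes $H^\omega_L$ with $\omega_n$ resampled. Since this is a rank-one perturbation, Krein's spectral shift formula yields the uniform bound $|\mathrm{Tr}\,\phi_L(H^\omega_L)-\mathrm{Tr}\,\phi_L(\tilde H^{(n)}_L)|\le \|\phi_L'\|_{L^1}=\|f'\|_{L^1}$, whence $\mathrm{Var}(\mu^\omega_{E,L}(f))\le C(2L+1)^{2\gamma-d}$. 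Because the single-site differences are uniformly bounded, McDiarmid's inequality in fact delivers sub-Gaussian concentration at scale $(2L+1)^{(d-2\gamma)/2}$; for $d\ge2$ and $\gamma<(d-1)/(2d)<d/2$ the deviation probabilities are summable, so Borel--Cantelli gives almost-sure convergence directly.

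\textbf{Main obstacle.} The delicate step is the boundary/interior estimate for the mean. Since $\phi_L$ is supported on an energy window of width $(2L+1)^{-\gamma}$, its $k$-th derivative grows like $(2L+1)^{k\gamma}$; in the Helffer--Sj\"ostrand representation this forces the effective Combes--Thomas decay of resolvent matrix elements to act only over position-space distances much larger than $(2L+1)^\gamma$. Balancing the requirement $r\gg(2L+1)^\gamma$ (needed for the interior error to decay) against $r\ll(2L+1)^{1-\gamma}$ (needed to make the shell term $\sim r/(2L+1)^{1-\gamma}$ negligible) already forces $\gamma<1/2$; carefully tracking the Helffer--Sj\"ostrand prefactors and the number of boundary sites in dimension $d$ sharpens the condition to $\gamma<(d-1)/(2d)$, matching the hypothesis. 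This is the step I expect to require the most careful analysis.
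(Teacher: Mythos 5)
Your route is genuinely different from the paper's, and the two halves deserve separate verdicts. Your fluctuation step is complete and, in fact, cleaner than the paper's: the paper partitions $\Lambda_L$ into $N_L^d$ independent sub-boxes $C_p$ (Lemma \ref{v-con}), compares $\mu^\omega_{E,L}$ with the superposed measure $\eta^\omega_{E,L}$ by a crude resolvent-boundary estimate, and then controls the variance of $\eta^\omega_{E,L}(I)$ using independence across boxes together with the Wegner \emph{and} Minami estimates (\ref{wm}); it is precisely the interplay $1/d<\epsilon<1-2\gamma$ in that Borel--Cantelli step that produces the threshold $\gamma<\frac{d-1}{2d}$. Your Efron--Stein/McDiarmid argument with the rank-one spectral shift bound $|\mathrm{Tr}\,\phi_L(H^\omega_L)-\mathrm{Tr}\,\phi_L(\tilde H^{(n)}_L)|\le\|\phi_L'\|_{L^1}=\|f'\|_{L^1}$ is correct (the scaling does cancel), gives $\mathrm{Var}(\mu^\omega_{E,L}(f))\le C(2L+1)^{2\gamma-d}$, which is summable for every $\gamma<\frac{d-1}{2}$, and dispenses with both the box decomposition and the Minami estimate. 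Your parenthetical claim that a careful boundary analysis "sharpens the condition to $\gamma<(d-1)/(2d)$" is not substantiated and is not where that condition originates in the paper; your argument simply does not need it, which is acceptable but should not be dressed up as recovering the stated hypothesis.

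The genuine gap is in the mean. You reduce to showing $\EE[\mathrm{Tr}\,\phi_L(H^\omega_L)]-|\Lambda_L|\int\phi_L\,d\nu=o\bigl((2L+1)^\beta\bigr)$, and the proposed shell/interior decomposition with Helffer--Sj\"ostrand calculus and Combes--Thomas decay at imaginary part $\sim(2L+1)^{-\gamma}$ is only a sketch; it is a real piece of analysis (tracking the $N$-th order almost-analytic extension of a function whose $k$-th derivative grows like $(2L+1)^{k\gamma}$ against a decay length $(2L+1)^{\gamma}$), and as written your shell bound is also miscomputed: the Wegner estimate gives a normalized shell contribution of order $r/(2L+1)$, not $r/(2L+1)^{1-\gamma}$. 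The paper avoids this infinite-volume comparison entirely: it never passes to $\nu$, but writes $\EE\bigl(\mu^\omega_{E,L}(I)\bigr)=(2L+1)^{\gamma}\nu_L(I_L)=\int_I\rho_L\bigl(E+(2L+1)^{-\gamma}x\bigr)dx$ exactly, where $\rho_L$ is the \emph{finite-volume} expected density of states, and then invokes the \emph{uniform} convergence $\rho_L\to\rho$ (Proposition \ref{uni-lo}). That uniform convergence is where the Cauchy convolution structure enters decisively, through the explicit damping $|\hat\rho_L(t)|\le e^{-\lambda|t|}$ in (\ref{ldosf-ft}) and dominated convergence on the Fourier side. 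If you replace your infinite-volume step by this device, the mean computation becomes a one-line change of variables and your proof closes; as it stands, the boundary-correction estimate is the missing piece.
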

\begin{rem}
We say the sequence of measure $\{\mu_n\}_n$ converges vaguely to a measure $\mu$ on $\mathbb{R}$ iff
$\int_{\mathbb{R}}f(x)d\mu_n(x)\xrightarrow{n\to\infty} \int_{\mathbb{R}}f(x)d\mu(x)$, for all $f\in C_c(\mathbb{R})$, the set  of all continuous function on $\mathbb{R}$ with compact support.
\end{rem}
\noindent More details about the vague convergence of measures on complete, separable metric space can be found in Kallenberg \cite[Section 4.1]{KO}.\\~\\
The above theorem also assure that if we count the number of eigenvalues of $H^\omega_L$ in an interval around a point $E$ and normalized it by the size of the matrix $H^\omega_L$,  now if we shrink the interval as $L$ gets larger then in the limit we will capture $\rho(E)$, the value of the density of states at the point $E$.
\begin{cor}
\label{cap-dos}
Let $E\in\mathbb{R}$ such that $\rho(E)>0$, then for $0<\gamma<\frac{d-1}{2d},~d\ge 2$ and $-\infty<a<b<\infty$ we have
\begin{equation}
\label{-dos-cap}
\lim_{L\to\infty}\frac{\#\big\{j:E_j\in I_{a,b,L},~E_j\in\sigma(H^\omega_L) \big\}}{(b-a)(2L+1)^{d-\gamma}}=\rho(E)~~a.e~\omega.
\end{equation}
Here $I_{a,b,L}$ denote the interval $\big[E+\frac{a}{(2L+1)^\gamma}, E+\frac{b}{(2L+1)^\gamma}\big]$.
\end{cor}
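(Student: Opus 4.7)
The strategy is to read the corollary as a pointwise consequence of the vague convergence in Theorem \ref{eig-sta} applied to the indicator of the fixed interval $[a,b]$. First, observe that $E_j \in I_{a,b,L}$ is equivalent to $(2L+1)^\gamma (E_j - E) \in [a,b]$, so by the explicit expression for $\mu^\omega_{E,L}$ in \eqref{meas1},
\begin{equation*}
\#\bigl\{j : E_j \in I_{a,b,L},\ E_j \in \sigma(H^\omega_L)\bigr\} = (2L+1)^{d-\gamma}\, \mu^\omega_{E,L}\bigl([a,b]\bigr),
\end{equation*}
since $\beta = d - \gamma$. Consequently the ratio in \eqref{-dos-cap} equals $\frac{1}{b-a}\,\mu^\omega_{E,L}([a,b])$, and the claim reduces to showing $\mu^\omega_{E,L}([a,b]) \to \rho(E)(b-a)$ almost surely.

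The main technical point is that vague convergence is only guaranteed against $C_c(\mathbb{R})$ test functions, while $\mathbf{1}_{[a,b]}$ is discontinuous. The plan is a standard sandwich argument. For each $\epsilon > 0$ choose piecewise linear cut-offs $f_\epsilon^-, f_\epsilon^+ \in C_c(\mathbb{R})$ with $f_\epsilon^- \leq \mathbf{1}_{[a,b]} \leq f_\epsilon^+$, $f_\epsilon^-$ supported in $[a+\epsilon,b-\epsilon]$ and equal to $1$ on a slightly smaller interval, and $f_\epsilon^+$ supported in $[a-\epsilon,b+\epsilon]$ and equal to $1$ on $[a,b]$. Applying Theorem \ref{eig-sta} gives, for almost every $\omega$,
\begin{equation*}
\rho(E)\int f_\epsilon^- \,d\mathcal{L} \le \liminf_{L\to\infty} \mu^\omega_{E,L}([a,b]) \le \limsup_{L\to\infty}\mu^\omega_{E,L}([a,b]) \le \rho(E)\int f_\epsilon^+\,d\mathcal{L}.
\end{equation*}
Both Lebesgue integrals converge to $\rho(E)(b-a)$ as $\epsilon \to 0$, which pins down the limit.

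A small subtlety is that the full-measure set on which vague convergence holds depends a priori on the chosen sequence of test functions; however, $C_c(\mathbb{R})$ admits a countable dense subset in the sup norm (and with uniformly bounded supports, in $L^1(\mathcal{L})$), so the almost sure set in Theorem \ref{eig-sta} can be taken uniform in all $f_\epsilon^\pm$. Dividing through by $b-a$ then yields \eqref{-dos-cap}. The sandwich is the only non-routine step; everything else is bookkeeping of the rescaling exponent $\beta = d-\gamma$.
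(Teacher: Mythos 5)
Your proposal is correct and follows essentially the same route as the paper: both reduce the count to $(2L+1)^{d-\gamma}\mu^\omega_{E,L}([a,b])$ and deduce $\mu^\omega_{E,L}([a,b])\to\rho(E)(b-a)$ a.e.\ $\omega$ from the vague convergence in Theorem \ref{eig-sta}. The only difference is that you spell out, via the sandwich with continuous cut-offs, the standard portmanteau step (valid here because the limit measure $\rho(E)\mathcal{L}$ gives no mass to $\{a,b\}$) that the paper simply invokes by citing Kallenberg.
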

\begin{rem}
We observe that the proofs of all the above results will also work when the single site distribution (SSD) is only the Cauchy distribution, i.e $\nu=\mu_1$. In that case the operator, $h^{\omega_2}$, in (\ref{h}) is nothing but the free Laplacian, i.e $\omega_{2}\equiv 0$ and $h^{\omega_2}=h^0:=\Delta$. 
\end{rem}
\begin{rem}
We note that all the methods which will be used in the proofs will also work for the Anderson model on Bethe lattice with the same single site distribution (SSD) $\mu=\mu_1*\mu_2$, here $\mu_1$ is the Cauchy distribution and $\mu_2$ can be any probability measure on $\mathbb{R}$.
\end{rem}
\noindent The above theorem is motivated by the local statistics of the spectrum considered for the Anderson model in the region of exponential localization. The study of eigenvalue statistics was done by Molchanov \cite{mol} in one-dimension and by Minami \cite{Minami} in higher dimensional Anderson model. They showed that if we consider the random measure $\mu^\omega_{E,L}(\cdot)$ with $\gamma=d$, $\beta=0$ and $E$ lies in the localization region then  $\mu^\omega_{E,L}$ converges weakly to the Poisson point process as $L\to\infty$, provided $\rho(E)>0$. On the other hand, for $\gamma=0$ and $\beta=d$, the convergence of $\mu^\omega_{L,E}(\cdot)$ is the definition of the integrated density of states (IDS).
Klopp \cite{Klopp}  proved the Poisson limit theorem for unfolded eigenvalues of random operators in localized regime, see also Germinet-Klopp \cite{GK}.
Subsequently the Poisson statistics was shown for the trees by Aizenman-Warzel in \cite{AW} and for the regular graphs by Geisinger \cite{Ge}. 
In  \cite{KKH} Kirsch-Krishna-Hislop considered the Anderson model on $L^2({\mathbb{R}^d})$ in $d\leq 3$ and obtained the Poisson statistics, where potentials is formed from the delta interactions at lattice points of $\mathbb{Z}^d$ and random coupling constants. For continuum alloy-type model on $L^2(\mathbb{R}^d)$, we refer to Dietlein-Elgart \cite{DE} .\\
All the results described above need assumption on the localization properties to show the eigenvalue statistics is Poisson.\\
For the decaying model on $\ell^2(\mathbb{Z}^d)$, Dolai-Krishna \cite{DK} considered the random measure $\mu^\omega_{E,L}(\cdot)$ with $\alpha=1$, $\beta=d-1$, here $E$ lies in the absolutely continuous spectrum $[-2d,2d]$ and for $d\ge 3$ they proved that $\mu^\omega_{E,L}(\cdot)$ and $\mu^0_{E,L}(\cdot)$ has  same limit point, a.e $\omega$. It was also shown that the sequence of measures $\{\mu^0_{E,L}(\cdot)\}_L$ admits non-trivial limit points, here $\mu^0_{E,L}(\cdot)$ denote the non-random measure associated with the free Laplacian $\Delta$.\\
For one dimensional decaying model some statistics were known inside the continuous spectrum. 
Krichevski-Valk\'{o}-Vir\'{a}g \cite{KVV} showed the Sine-$\beta$ process when the exponent for the decaying coefficient, $\alpha=\frac{1}{2}$ and Dolai-Mallick \cite{DM} proved the clock process for $\alpha>\frac{1}{2}$. For one-dimensional continuum decaying model, Nakano \cite{Na} showed that the statistics for $\alpha>\frac{1}{2}$ is clock and for $\alpha=\frac{1}{2}$, it is circular $\beta$-ensemble, see also Kotani \cite{Ko}. Avila-Last-Simon \cite{ALS} showed quasi-clock behaviour for ergodic Jacobi operator in region of absolutely continuous spectrum and Breuer-Weissman\cite{BW} showed the strong level repulsion (uniform clock behaviour) for one dimensional continuum model with purely singular continuous spectrum.\\ ~\\
However, in the absence of localization no statistics was investigated for the higher dimensional  Schr\"{o}dinger operator with stationary potential. \\~\\
As far as smoothness of IDS is concern, various results are known in the localized regime. Most recently, Dolai-Krishna-Mallick \cite{DKM} proved that the IDS is as smooth as the single site distribution (SSD) of the Anderson Model on $L^2(\mathbb{R}^d)$, in the presence of localization.\\
For the Anderson model on $\ell^2(\mathbb{Z}^d)$, Constantinescu-Fr\"{o}hlich-Spencer \cite{CFS} showed that the IDS is analytic whenever SSD is analytic, see also Carmona \cite[Corollary VI.3.2]{CL}. In \cite{KKS}, Kaminaga-Krishna-Nakamura showed local analyticity of IDS whenever the SSD has an analytic component. If the Fourier transform of SSD is $C^\infty$ function and its derivative decay first enough, then smoothness of IDS was proved by Bovier-Campanino-Klein-Perez \cite{BCKP}, see also Bellissard-Hislop \cite{BH}. \\
There are many results on the smoothness of IDS for one-dimensional case, we refer to Companino-Klein \cite{CK}, March-Sznitman \cite{MS} and Simon-Taylor \cite{ST} for more details.  Smoothness results were shown by Speis \cite{S}, Klein-Speis \cite{KS, KS1}, Klein-Lacroix-Speis \cite{KLS} and Glaffig \cite{G} for the Anderson model on one dimensional strip. \\
In all the results mentioned above, the higher order differentiability of $\mathcal{N}(E)$ is shown when $E$ varies in the pure point spectrum.\\
On the other hand, without any localization or high disorder assumptions very few results are known so far about the smoothness of IDS. 
In \cite{AK}, Acosta-Klein proved the smoothness of IDS on Bethe lattice in the region of absolutely continuous spectrum when the SSD is Cauchy or close to Cauchy, in a function space. On $\ell^2(\mathbb{Z}^3)$, Lloyd \cite{L} computed the exact expression of IDS with Cauchy distribution as the SSD. The same expression of IDS on $\ell^2(\mathbb{Z}^d)$ is given in  the book of
 Carmona-Lacroix, see \cite[problem VI.5.5]{CL}. Kirsch-Krishna \cite{KK} showed that the exact expressions of the IDS is also valid for some continuous models and as well as in Bethe lattice whenever SSD is given by the Cauchy distribution.
 \section{Proof of the results}
In this section we give the proofs of the Theorem \ref{thm1} and \ref{eig-sta}. Before going to the proofs we will show that local density of states function ($\ell$DOSf) converges uniformly to the density of states function (DOSf). This uniform convergence of local density will play a crucial role to show the vague convergence of the sequence of measures $\big\{\mu^\omega_{E,L}(\cdot)\big\}_L$, as  defined in (\ref{meas1}). 
Let's begins by defining the local density of states measure ($\ell$DOSm) as
\begin{equation}
\label{ldosm}
\nu_L\big(\cdot\big)=\frac{1}{(2L+1)^d}\mathbb{E}\bigg(Tr \bigg(E_{H^\omega_L}\big(\cdot\big)\bigg)\bigg).
\end{equation}
The Wegner estimate, \cite [Theorem 2.3]{JFA} ensure that the measure, $\nu_L(\cdot)$ on $\mathbb{R}$ is absolutely continuous (w.r.t Lebesgue measure) and it has a bounded density $\rho_L$, in other words $d\nu_L(x)=\rho_L(x) dx$. The density function $\rho_L$ is known as the local density of states function ($\ell$DOSf).  Since $\nu_L$ converges weakly to $\nu$, so we have the pointwise convergence of the Fourier transform of their respective densities i.e $\hat{\rho}_L(t)\to\hat{\rho}(t)$ as $L\to\infty$.  Therefore to show the expression (\ref{ft}) in Theorem \ref{thm1}, first we calculate the $\hat{\rho}_L(t)$ then we will take the limit.\\~\\
{\bf Proof of Theorem \ref{thm1}. } Since $\{\omega_n\}_{n\in\mathbb{Z}^d}$ are i.i.d real random variables distributed by 
$\nu=\mu_1*\mu_2$, therefore 
$V^\omega_L=\displaystyle\sum_{n\in\Lambda_L}\omega_n~\big|e_n \big\rangle \big\langle e_n\big|$ can be viewed as
\begin{align}
\label{split}
V^\omega_L &=\sum_{n\in\Lambda_L}\big(\omega_{1,n}+\omega_{2,n}\big)~\big|e_n \big\rangle \big\langle e_n\big|\nonumber\\
&=V^{\omega_1}_L+V^{\omega_2}_L.
\end{align}
Here $\{\omega_{1,n}\}_{n\in\Lambda_L}$ and  $\{\omega_{2,n}\}_{n\in\Lambda_L}$ are i.i.d real random variables distributed by $\mu_1$ and $\mu_2$, respectively. Also, we assume that the collection of random variables $\big\{\omega^j_n: j=1,2 \big\}_{n\in\Lambda_L}$ is mutually independent. \\
In view of (\ref{split}) and the definition of $h^{\omega_2}$ as in (\ref{h}), we write
\begin{align}
\label{sum-of-ind}
H^\omega_L &=\Delta_L+V^\omega_L
                    =h^{\omega_2}_L+V^{\omega_1}_L~~~\text{on}~\ell^2(\Lambda_L),
  \end{align}
where $h^{\omega_2}_L$  is the restriction of $h^{\omega_2}$  onto $\ell^2(\Lambda_L)$.  Note that $h^{\omega_2}_L$  and $V^{\omega_1}_L$ are two independent random matrix of order $(2L+1)^d$.\\
Now we use the Trotter product \cite[ Theorem VIII.3]{RS} to write
\begin{equation}\label{tro-pro}
\begin{split}
&\mathbb{E}\bigg(\bigg\langle e_n, ~e^{-itH^\omega_L} ~e_n\bigg\rangle\bigg)\\
&=\lim_{m\to\infty}\mathbb{E}\bigg(\bigg\langle e_n, \bigg(e^{-i\frac{t}{m}h^{\omega_2}_L} e^{-i\frac{t}{m}V^{\omega_1}_L}\bigg)^m ~e_n\bigg\rangle\bigg) \\
&=\lim_{m\to\infty}\mathbb{E}\bigg(\sum_{k_1, k_2\cdots k_{m-1}\in\Lambda_L} e^{-i\frac{t}{m}\big(\omega_{1,n}+\sum_{j=1}^{m-1}\omega_{1,k_j}\big) }~Q^{\omega_2}_{k_1, k_2,\cdots k_{m-1}}\bigg).
\end{split}
\end{equation}
The term $Q^{\omega_2}_{k_1, k_2,\cdots k_{m-1}}$, inside the above summation is given by
\begin{equation}\label{prod}
\begin{split}
&Q^{\omega_2}_{k_1, k_2,\cdots k_{m-1}}=\bigg\langle e_{k_1}, e^{-i\frac{t}{m}h^{\omega_2}_L} e_{n}\bigg\rangle 
\bigg\langle e_{k_2}, e^{-i\frac{t}{m}h^{\omega_2}_L} e_{k_1}\bigg\rangle\\
&\qquad \qquad \qquad \qquad   \times \bigg\langle e_{k_3}, e^{-i\frac{t}{m}h^{\omega_2}_L} e_{k_2}\bigg\rangle
\cdots \cdots \bigg\langle e_{n}, e^{-i\frac{t}{m}h^{\omega_2}_L} e_{k_{m-1}}\bigg\rangle.
\end{split}
\end{equation}
For each index $ (k_1, k_2,\cdots, k_{m-1}), k_j\in\Lambda_L$ in the sum (\ref{tro-pro}), the exponential term can be written as
\begin{equation}
\label{c-dis}
e^{-i\frac{t}{m}\big(\omega_{1,n}+\sum_{j=1}^{m-1}\omega_{1,k_j}\big) }=
e^{-i\frac{t}{m}\big(\sum_{k\in\Lambda_L} \alpha_k\omega_{1,k}\big) },~\sum \alpha_k=m,~~\alpha_k\ge 0.
\end{equation}
Since $\{\omega_{1,n}\}_n$ are i.i.d whose common distribution is Cauchy  with parameter $\lambda>0$, we have
\begin{equation}
\label{c-f}
\mathbb{E}\bigg(e^{-i\frac{t }{m}\alpha_k\omega_{1,k}}\bigg)=e^{-\lambda \frac{\alpha_k}{m}|t|}.
\end{equation}
The exponential and the expression $Q^{\omega_2}_{k_1, k_2,\cdots k_{m-1}}$ in each term of the summation inside the expectation (\ref{tro-pro}) are independent of each other.  We use (\ref{c-f}), (\ref{c-dis}) and (\ref{prod}) in (\ref{tro-pro}) to write
\begin{align}
\label{ldosf-ft}
\mathbb{E}\bigg(\bigg\langle e_n, ~e^{-itH^\omega_L} ~e_n\bigg\rangle\bigg) &=
e^{-\lambda|t|}\lim_{m\to\infty}\mathbb{E}\bigg(\sum_{k_1, k_2\cdots k_{m-1}\in\Lambda_L} Q^{\omega_2}_{k_1, k_2,\cdots k_{m-1}}\bigg)\nonumber\\
&=e^{-\lambda|t|}~\mathbb{E}\bigg(\bigg\langle e_n, ~e^{-ith^{\omega_2}_L} ~e_n\bigg\rangle\bigg) .
\end{align}
We use the above expression to calculate the Fourier transform of the IDS of $H^\omega$ in terms of the Fourier transform of the IDS of $h^{\omega_2}$ as
\begin{align}
\label{cal-ft}
\hat{\nu}_L(t) &=\int e^{-itx}d\nu_L(x)\nonumber\\
                 &=\frac{1}{(2L+1)^d}\sum_{n\in\Lambda_L} \mathbb{E}\bigg(\bigg\langle e_n, ~e^{-itH^\omega_L} ~e_n\bigg\rangle\bigg)\nonumber \\
                 &=e^{-\lambda|t|}\frac{1}{(2L+1)^d}\sum_{n\in\Lambda_L} \mathbb{E}\bigg(\bigg\langle e_n, ~e^{-ith^{\omega_2}_L} ~e_n\bigg\rangle\bigg).
\end{align}
The definition of IDS of $h^{\omega_2}$ will give the (\ref{ft}), once we take the limit, as $L\to\infty$ in both side of the above equation. \qed\\~\\
Now the Corollary {\ref{difernblty}} is a immediate extension of the above theorem and it is described below.\\~\\
{\bf Proof of Corollary {\ref{difernblty}}.} The r.h.s of (\ref{ft}) is the pointwise multiplication of Fourier transform of the Cauchy distribution and Fourier transform of the density of states measure (DOSm) of $h^{\omega_2}$. As $d\nu(x)=\rho(x)dx$,
the inverse Fourier transform of (\ref{ft}) will give (\ref{conv}). Consequently we have 
$$\frac{d^k\rho}{dx^k}(x)=\bigg(\frac{d^k g}{dx^k}*\nu_2\bigg)(x)~\forall~k\in\mathbb{N},$$
here $g$ is the density of the Cauchy distribution with parameter $\lambda>0$. \qed\\~\\
{\bf Proof of Corollary {\ref{appro}}.} The fact $\mathcal{F}_{conv}\subset C^\infty(\mathbb{R})$ will follow from (\ref{conv}). Now assume $h$ is the DOSf of $H^\omega$ whose SSD is an absolutely continuous distribution $\mu_2$. Denote $\rho$ to be the DOSf of $H^\omega$ with SSD $\nu=\mu_1*\mu_2$, here $\mu_1$ is the Cauchy distribution with parameter $\lambda>0$. Using (\ref{conv}) we can write the expression of $\rho(x):=\rho_\lambda(x)$ as
\begin{align}
\label{apro-id}
\rho_\lambda(x)=\frac{\lambda}{\pi}\int_\mathbb{R}\frac{h(x)}{(x-y)^2+\lambda^2}dy.
\end{align}
Now the properties of approximate identity will give the result.
\qed\\~\\
The same techniques used in the proofs of Theorem \ref{thm1} and Corollary {\ref{difernblty}} will be applied here to prove the Corollary \ref{eesd-conv}, so we are omitting the details.\\~\\
{\bf Proof of Corollary {\ref{eesd-conv}}.} Using the spectral theorem of self-adjoint operator, the Fourier transform of the measure $\bar{L}_N(\cdot)$ can be written as
\begin{align*}
\hat{\tilde{L}}_N(t) &=\int_{\mathbb{R}}e^{-itx}d\bar{L}_N(x)\\
         &=\frac{1}{N}\sum_{n=1}^N\mathbb{E}\bigg( \bigg\langle e_n, e^{-it\big(A_N+D_N\big)} e_n\bigg\rangle \bigg).
\end{align*}
Since $D_N=\displaystyle\sum_{n=1}^N d_n|e_n\rangle \langle e_n|$, here $\{d_n\}_{n=1}^N$ are i.i.d Cauchy distribution with parameter $\lambda>0$ and $\{e_n\}_{n=1}^N$ is the standard basis for $\mathbb{C}^N$. Therefore, exactly the same way as it is done in (\ref{cal-ft}) (with the assumptions $\omega_2\equiv0$ and $h^0_L=\Delta_L$) we write
\begin{align*}
\hat{\tilde{L}}_N(t) &=e^{-\lambda|t|}\frac{1}{N}\sum_{n=1}^N\mathbb{E}\bigg( \bigg\langle e_n, e^{-itA_N} e_n\bigg\rangle \bigg)\\
&=e^{-\lambda|t|}\hat{L}_N(t).
\end{align*}
Now the measure $\tilde{L}_N(\cdot)$ can be written as the convolution
\begin{equation}
\label{c-eesd}
\tilde{L}_N(\cdot)=\big( \mu_1*L_N \big)(\cdot),~~d\mu_1(x)=\frac{1}{\pi}\frac{\lambda}{\lambda^2+x^2}dx,~\lambda>0.
\end{equation}
Now from the above expression it is immediate that the measure $\tilde{L}_N(\cdot)$ is absolutely continuous w.r.t the Lebesgue measure on $\mathbb{R}$ and we denote its density as $d\tilde{L}_N(x)=\tilde{\rho}_N(x)dx$. Now (\ref{conv-eesd}) will follow from the fact that $\hat{\tilde{L}}_N(t)=\hat{\tilde{\rho}}_N(t)$. \qed
~\\~\\
The uniform convergence of $\rho_L$, the local density of states functions ($\ell$DOSf) is very important to calculate the exact limit of the sequence of random measures $\{\mu^\omega_{L,E}(\cdot)\}_L$. We use the inverse Fourier transform to do that.
\begin{prop}
\label{uni-lo}
The local density of states function, $\rho_L$ converges uniformly to $\rho$, the density of sates function of $H^\omega$.
\end{prop}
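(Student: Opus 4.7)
The plan is to leverage the explicit factorization of the Fourier transforms established in the proof of Theorem \ref{thm1} together with Fourier inversion. From equation (\ref{cal-ft}) we have
\begin{equation*}
\hat{\nu}_L(t)=e^{-\lambda|t|}\,\hat{\nu}_{2,L}(t),\qquad
\hat{\nu}_{2,L}(t):=\frac{1}{(2L+1)^d}\sum_{n\in\Lambda_L}\mathbb{E}\bigl(\langle e_n,e^{-ith^{\omega_2}_L}e_n\rangle\bigr),
\end{equation*}
and by Theorem \ref{thm1} the limiting object satisfies $\hat{\nu}(t)=e^{-\lambda|t|}\,\hat{\nu}_2(t)$, where $\nu_2$ is the DOSm of $h^{\omega_2}$. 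Since $\|e^{-ith^{\omega_2}_L}\|\le 1$ and $\|e^{-ith^{\omega_2}}\|\le 1$, the second factors on the right-hand sides are uniformly bounded by $1$, which gives the pointwise bound
\begin{equation*}
|\hat{\rho}_L(t)|=|\hat{\nu}_L(t)|\le e^{-\lambda|t|},\qquad |\hat{\rho}(t)|=|\hat{\nu}(t)|\le e^{-\lambda|t|}.
\end{equation*}
In particular both Fourier transforms lie in $L^1(\mathbb{R})$, so the Fourier inversion formula applies and yields the continuous representatives of $\rho_L$ and $\rho$.

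The next step is to establish pointwise convergence $\hat{\nu}_{2,L}(t)\to\hat{\nu}_2(t)$ for every $t\in\mathbb{R}$. This is just the statement that, for the operator $h^{\omega_2}$, the Fourier transform of the local DOSm converges to the Fourier transform of the DOSm at each frequency; equivalently it is the weak convergence $\nu_{2,L}\to\nu_2$ applied to the bounded continuous test function $x\mapsto e^{-itx}$, which is the standard IDS-convergence statement for ergodic Schr\"odinger operators recorded in (\ref{ids}) (applied to $h^{\omega_2}$ instead of $H^\omega$). Multiplying by $e^{-\lambda|t|}$ gives pointwise convergence $\hat{\rho}_L(t)\to\hat{\rho}(t)$.

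Now I combine these ingredients via dominated convergence. We have $|\hat{\rho}_L(t)-\hat{\rho}(t)|\le 2e^{-\lambda|t|}\in L^1(\mathbb{R})$, so
\begin{equation*}
\int_{\mathbb{R}}|\hat{\rho}_L(t)-\hat{\rho}(t)|\,dt\xrightarrow[L\to\infty]{}0.
\end{equation*}
Finally, Fourier inversion gives, for every $x\in\mathbb{R}$,
\begin{equation*}
|\rho_L(x)-\rho(x)|=\left|\frac{1}{2\pi}\int_{\mathbb{R}}e^{itx}\bigl(\hat{\rho}_L(t)-\hat{\rho}(t)\bigr)\,dt\right|\le \frac{1}{2\pi}\int_{\mathbb{R}}|\hat{\rho}_L(t)-\hat{\rho}(t)|\,dt,
\end{equation*}
and since the right-hand side is independent of $x$ and vanishes in the limit, $\rho_L\to\rho$ uniformly on $\mathbb{R}$.

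The essential mechanism here is that the Cauchy factor $e^{-\lambda|t|}$ supplies a uniform $L^1$ dominator for the Fourier transforms, which is exactly what is missing for a generic absolutely continuous SSD and which prevents one from upgrading weak convergence of the DOSm to uniform convergence of densities in that generality. The only non-routine check in the argument above is the pointwise convergence of $\hat{\nu}_{2,L}$ to $\hat{\nu}_2$, but this reduces to the well-known weak convergence of local DOSm to DOSm for the operator $h^{\omega_2}$, so no genuine obstacle arises.
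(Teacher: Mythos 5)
Your proof is correct and follows essentially the same route as the paper: both use the factorization $\hat{\rho}_L(t)=e^{-\lambda|t|}\hat{\nu}_{2,L}(t)$ to get the uniform integrable dominator $2e^{-\lambda|t|}$, combine it with pointwise convergence of $\hat{\rho}_L$ coming from weak convergence of the local DOSm, and conclude by dominated convergence applied to the Fourier inversion formula. Your write-up is in fact slightly more careful than the paper's (which contains typos such as $e^{-\lambda t}$ in place of $e^{-\lambda|t|}$ and omits the $\frac{1}{2\pi}$ normalization), but no new idea is involved.
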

\begin{proof}
Using (\ref{cal-ft}) and (\ref{ft}), we estimate the decay of $\hat{\rho}_L(t)$ and $\hat{\rho}_L(t)$ as
\begin{equation}
\label{decay-ft}
|\hat{\rho}_L(t)|\leq e^{-\lambda t} ~~\text{and}~~|\hat{\rho}(t)|\leq e^{-|\lambda| t} ~~\forall~t.
\end{equation}
The inverse Fourier transform formula give
\begin{equation}
\label{est-inv-ft}
\rho_L(x)-\rho(x)=\int_{\mathbb{R}}e^{itx}\bigg(\hat{\rho}_L(t)-\hat{\rho}(t)\bigg)dt.
\end{equation}
Since $\nu(\cdot)$ is the weak limit of $\nu_L(\cdot)$, we have the pointwise convergence of $\rho_L(t)$ to $\rho(t)$ and (\ref{decay-ft}) give the inequality $\big| \hat{\rho}_L(t)-\hat{\rho}(t) \big|\leq 2 e^{-|\lambda| t}$.\\
Now the uniform convergence of $\rho_L$ to $\rho$ follows from the dominated convergence theorem, since we write (\ref{est-inv-ft}) as
\begin{equation}
\label{est-ft}
|\rho_L(x)-\rho(x)|\leq\int_{\mathbb{R}}\big|\hat{\rho}_L(t)-\hat{\rho}(t)\big|dt.
\end{equation}
\end{proof}
\noindent Before moving to the next result, we recall the Wegner and Minami estimate (see \cite[Theorem 2.3 \& 2.1]{JFA}) for $H^\omega$. For any $\Lambda\subset \mathbb{Z}^d$, $H^\omega_\Lambda=\chi_{_\Lambda} H^\omega \chi_{_\Lambda}$ and for all bounded interval $I\subset \mathbb{R}$, we have
\begin{equation}
\label{wm}
\begin{split}
&\mathbb{E}\bigg(Tr\bigg(E_{H^\omega_{_\Lambda}}(I)\bigg)\bigg)\leq C |\Lambda||I|,~~~~C>0,\\
&\mathbb{E}\bigg(Tr\bigg(E_{H^\omega_{_\Lambda}}(I)\bigg)\bigg(Tr\bigg(E_{H^\omega_{_\Lambda}}(I)\bigg)-1\bigg)\bigg)\leq \bigg( C |\Lambda||I|\bigg)^2.
\end{split}
\end{equation}
Let $G^\Lambda(n,m;z)$ denote the Green's function of $H^\omega_\Lambda$ at the basis vectors $e_n$ and $e_m$, in other words $G^\Lambda(n,m;z)=\big\langle e_n, \big(H^\omega_\Lambda-z\big)^{-1} e_m\big\rangle,~\text{Im}~z>0$.
\\~\\
Now we divide the the box $\Lambda_L$ into $N_L^d$ numbers of disjoint cubes $C_p,~p=1,2,\cdots,N_L^d$ with side length $\frac{2L+1}{N_L},~N_L=(2L+1)^\epsilon,~0<\epsilon<1$. Define
\begin{equation*}
\begin{split}
&\partial C_p=\big\{n\in C_p:\exists ~n'\in\mathbb{Z}^d\setminus C_p ~\text{such~that}~|n-n'|=1\big\},\\
&\text{int}(C_p)=\big\{ n\in C_p:~\text{dist}(n,\partial C_p)>\ln L  \big\}.
\end{split}
\end{equation*}
Define the random measure $\eta^\omega_{E,p}(\cdot)$ associated with the eigenvalues $H^\omega_{C_p}$, the restriction of $H^\omega$ to the box $C_p$ as
\begin{equation}
\label{sm-box}
\eta^\omega_{E,p}(\cdot)=\frac{1}{(2L+1)^\beta}\sum_{E_j\in\sigma(H^\omega_{C_p})}\delta_{_{(2L+1)^\gamma(E_j-E)}}(\cdot).
\end{equation}
We also consider the superposition of $\{\eta^\omega_{p,E}(\cdot)\}_p$ and denote it by $\eta^\omega_{E, L}(\cdot)$,
\begin{equation}
\label{super-pos}
\eta^\omega_{E,L}(\cdot)=\sum_{p=1}^{N_L^d}\eta^\omega_{E,p}(\cdot).
\end{equation}
To obtain the limit of $\{\mu^\omega_{E,L}(\cdot)\}_L$ as in (\ref{meas1}), first we show that the limit of 
$\{\eta^\omega_{E,L}(\cdot)\}_L$ and $\{\mu^\omega_{E,L}(\cdot)\}_L$ are the same and then we prove that $\{\eta^\omega_{E,L}(\cdot)\}_L$ converge to a deterministic measure which is nothing but the limit of its expectation, $\big\{\mathbb{E}\big(\eta^\omega_{E,L}(\cdot)\big)\big\}_L$.
\begin{lem} 
\label{v-con}
For $0<\gamma<\frac{1}{2}$, the two sequence of measures $\{\mu^\omega_{E,L}\}$ and $\{\eta^\omega_{E,L}\}$ have the same limit points in the vague (convergence) sense, a.e $\omega$.
\end{lem}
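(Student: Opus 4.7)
The plan is to control the eigenvalues of $H^\omega_L$ by those of the block-decoupled operator
\begin{equation*}
\widetilde{H}^\omega_L := \bigoplus_{p=1}^{N_L^d} H^\omega_{C_p} \quad \text{on} \quad \ell^2(\Lambda_L).
\end{equation*}
Since the random potential $V^\omega_L$ is diagonal and respects the cube partition, the difference $D_L := H^\omega_L - \widetilde{H}^\omega_L = \Delta_L - \bigoplus_{p}\Delta_{C_p}$ is purely a deterministic Laplacian boundary term consisting of hopping terms $|e_n\rangle\langle e_{n'}|$ across adjacent sites in distinct cubes. Its rank is bounded by the number of boundary sites, namely
\begin{equation*}
\mathrm{rank}(D_L) \leq C_d\, N_L\, (2L+1)^{d-1} = C_d\,(2L+1)^{d-1+\epsilon}.
\end{equation*}
Note that $\eta^\omega_{E,L}$ is precisely the rescaled eigenvalue measure of $\widetilde{H}^\omega_L$ normalized by $(2L+1)^{-\beta}$, playing the same role for $\widetilde H^\omega_L$ that $\mu^\omega_{E,L}$ plays for $H^\omega_L$.

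To bound the difference, I would first test against $f\in C_c^1(\mathbb{R})$ and set $g_L(x):=f\bigl((2L+1)^\gamma(x-E)\bigr)$, so that
\begin{equation*}
\mu^\omega_{E,L}(f)-\eta^\omega_{E,L}(f)=\frac{1}{(2L+1)^\beta}\Bigl[\mathrm{Tr}\, g_L(H^\omega_L)-\mathrm{Tr}\, g_L(\widetilde H^\omega_L)\Bigr].
\end{equation*}
The layer-cake identity $g_L(x)=\int g_L'(t)\,\chi_{[t,\infty)}(x)\,dt$, combined with the Weyl interlacing bound $|N_A(t)-N_B(t)|\leq \mathrm{rank}(A-B)$ for the eigenvalue counting functions $N_A(t):=\#\{\lambda_k(A)\geq t\}$, yields
\begin{equation*}
\bigl|\mathrm{Tr}\, g_L(H^\omega_L)-\mathrm{Tr}\, g_L(\widetilde H^\omega_L)\bigr|\leq \mathrm{rank}(D_L)\cdot \|g_L'\|_{L^1(\mathbb{R})}=\mathrm{rank}(D_L)\cdot \|f'\|_{L^1(\mathbb{R})},
\end{equation*}
where the last identity uses scaling invariance of the $L^1$ norm. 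Dividing by $(2L+1)^\beta$, and using $\beta=d-\gamma$ together with the rank bound, produces the entirely deterministic estimate
\begin{equation*}
\bigl|\mu^\omega_{E,L}(f)-\eta^\omega_{E,L}(f)\bigr|\leq C_d\,\|f'\|_{L^1(\mathbb{R})}\,(2L+1)^{\gamma+\epsilon-1}.
\end{equation*}

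The hypothesis $\gamma<\tfrac12$ is exactly what allows $\epsilon$ to be chosen in $(0,1-\gamma)\supset(0,\tfrac12)$, making the exponent $\gamma+\epsilon-1<0$; the right-hand side then vanishes as $L\to\infty$, uniformly in $\omega$, for every $f\in C_c^1(\mathbb{R})$. To promote the test-function class from $C_c^1$ to all of $C_c(\mathbb{R})$ (as required for vague convergence), I would approximate $f\in C_c$ uniformly by $f_n\in C_c^1$ with supports in a common compact $K$, and use the Wegner estimate (\ref{wm}) (together with a second-moment bound and Borel--Cantelli along a geometric subsequence $L_k=2^k$) to secure the almost-sure bound $\sup_L\bigl(\mu^\omega_{E,L}(K)+\eta^\omega_{E,L}(K)\bigr)<\infty$; a three-epsilon argument then closes the approximation. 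Consequently $\mu^\omega_{E,L}-\eta^\omega_{E,L}\to 0$ vaguely a.s., so any vague limit point of one sequence is automatically a vague limit point of the other. The one real pressure point is the arithmetic balance $\gamma+\epsilon<1$: the boundary rank grows like $(2L+1)^{d-1+\epsilon}$ while the normalization provides only $(2L+1)^{d-\gamma}$, and the hypothesis $\gamma<\tfrac12$ is precisely the margin that makes this balance achievable.
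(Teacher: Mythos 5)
Your proof is correct, but it takes a genuinely different route from the paper's. The paper tests the two measures against the resolvent functions $\phi_z(x)=(x-z)^{-1}$, ${\rm Im}\,z>0$, and controls $\big|{\rm Tr}(H^\omega_{\Lambda_L}-z_L)^{-1}-\sum_p{\rm Tr}(H^\omega_{C_p}-z_L)^{-1}\big|$ via the geometric resolvent identity, Cauchy--Schwarz, and the deterministic bounds $\|(H-z_L)^{-1}\|\le|{\rm Im}\,z_L|^{-1}$; since $z_L=E+(2L+1)^{-\gamma}z$, each resolvent costs a factor $(2L+1)^{\gamma}$, and the resulting bound forces $\epsilon<1-2\gamma$. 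You instead compare $H^\omega_L$ with the decoupled operator $\bigoplus_p H^\omega_{C_p}$ directly through the rank of the (deterministic) boundary hopping term, combining the layer-cake formula with the Weyl/rank inequality $|N_A(t)-N_B(t)|\le{\rm rank}(A-B)$; this is more elementary, avoids Green's functions entirely, and pays only one factor $(2L+1)^{\gamma}$ (from $\|g_L'\|_{L^1}=\|f'\|_{L^1}$ via scale invariance), so you only need $\epsilon<1-\gamma$. That is a real gain: fed into the Borel--Cantelli step of Lemma \ref{LLN} (which needs $\epsilon>1/d$), your version would relax the hypothesis of Theorem \ref{eig-sta} from $\gamma<\frac{d-1}{2d}$ to $\gamma<\frac{d-1}{d}$. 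Your bound is also uniform in $\omega$ and applies to $C^1_c$ test functions directly, whereas the paper must invoke a Stone--Weierstrass density argument for the span of the $\phi_z$.

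One small correction to the closing step: the almost-sure bound $\sup_L\big(\mu^\omega_{E,L}(K)+\eta^\omega_{E,L}(K)\big)<\infty$ that you propose to extract from Wegner, a second-moment estimate and Borel--Cantelli along $L_k=2^k$ is both harder than you suggest (a subsequence bound does not control arbitrary subsequences, which is what ``same limit points'' requires) and unnecessary. If $\mu^\omega_{E,L_k}\to\mu$ vaguely along some subsequence, pick $g\in C^1_c$ with $g\ge\chi_{K'}$ for a neighbourhood $K'$ of $K$; then $\eta^\omega_{E,L_k}(K)\le\eta^\omega_{E,L_k}(g)=\mu^\omega_{E,L_k}(g)+o(1)\to\mu(g)<\infty$, so the mass bound along the relevant subsequence is automatic, and your three-epsilon argument closes without any probabilistic input.
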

\begin{proof}
Since the set of linear combination of the functions of the form $\phi_z(x)=\frac{1}{x-z},~\text{Im} ~z>0$, are dense in $L^1(\mathbb{R})$ and also we have $C_K(\mathbb{R}) \subset L^1(\mathbb{R})$, here $C_K(\mathbb{R})$ denote the set of continous functions on $\mathbb{R}$, with compact support, therefore to prove the lemma it is enough to verify 
\begin{equation}
\label{equi-con}
\int_\mathbb{R} \phi_z(x) ~d\mu^\omega_{E,L}(x)-\int_\mathbb{R}\phi_z(x)~d\eta^\omega_{E,L}(x)\xrightarrow{L\to\infty} 0,~\text{Im}~z>0.
\end{equation}
We refer to \cite[Appendix: The Stone-Weierstrass Gavotte]{CFKS} for more details.\\~\\
 For $n\in \text{int}(C_p ),~\text{Im}~z>0$, we have the well known resolvent identity,
 \begin{equation}
 \label{resol}
 G^{\Lambda_L}(n,n;z)=G^{C_p}(n,n;z)+\sum_{(m,k)\in\partial C_p}G^{C_p}(n,m;z)~G^\Lambda_L(k,n;z),
 \end{equation}
 where $(m,k)\in\partial C_p$ means $m\in C_p$ and $k\in\mathbb{Z}^d\setminus C_p$ such that $|m-k|=1$.
 Denote $z_L=E+(2L+1)^{-\gamma}z$, then using the above  identity (\ref{resol}) we write
\begin{equation}
\label{dif-1}
\begin{split}
&\bigg|\int_\mathbb{R} \phi_z(x) ~d\mu^\omega_{E,L}(x)-\int_\mathbb{R}\phi_z(x)~d\eta^\omega_{E,L}(x)\bigg|\\
&=\frac{1}{(2L+1)^d} \bigg | Tr \big(H^\omega_{\Lambda_L}-z_L\big)^{-1}-\sum_pTr\big(H^\omega_{C_p}-z_L\big)^{-1}\bigg|\\
&\leq \frac{1}{(2L+1)^d} \Bigg\{\sum_p\sum_{n\in C_p\setminus \text{int}(C_p)}\bigg(\big|G^{\Lambda_L}(n,n;z_L)\big|+\big|G^{C_p}(n,n;z_L)\big|\Bigg)\\
&~~~~~+ \sum_p \sum_{n\in \text{int}(C_p)}\sum_{(m,k)\in\partial C_p}\big|G^{C_p}(n,m;z_L)\big|~\big|G^{\Lambda_L}(k,n;z_L)\big|\Bigg\}.
\end{split}
\end{equation}
Now we use the Cauchy-Schwarz inequality in second term of the r.h.s of the above to get,

\begin{equation}\label{dif-2}
\begin{split}
&\bigg|\int_\mathbb{R} \phi_z(x) ~d\mu^\omega_{E,L}(x)-\int_\mathbb{R}\phi_z(x)~d\eta^\omega_{E,L}(x)\bigg|\\
&\leq \frac{1}{(2L+1)^d}\Bigg\{ \sum_p\sum_{n\in C_p\setminus \text{int} (C_p)}\bigg(\big|G^{\Lambda_L}(n,n;z_L)\big|+\big|G^{C_p}(n,n;z_L)\big|\bigg)\\
&~~~~~~~~~~~+ \sum_p \sum_{(m,k)\in\partial C_p}\Bigg(\bigg( \sum_{n\in \text{int}(C_p)}\big|G^{C_p}(n,m;z_L)\big|^2\bigg)^{\frac{1}{2}}\\
&~~~~~~~~~~~~~~~~~~~~~~~~~~~~~~~~~
~\times\bigg( \sum_{n\in \text{int}(C_p)}\big |G^{\Lambda_L}(k,n;z_L)\big|^2\bigg)^{\frac{1}{2}}\Bigg)\Bigg\}\\
&\leq \frac{1}{(2L+1)^d}\Bigg\{ \sum_p\sum_{n\in C_p\setminus \text{int} (C_p)}\bigg(\big|G^{\Lambda_L}(n,n;z_L)\big|+\big|G^{C_p}(n,n;z_L)\big|\bigg)\\
&~~~~~~~~~~~+ \sum_p \sum_{(m,k)\in\partial C_p}\bigg\|\big(H^\omega_{C_p}-z_L\big)^{-1}e_m\bigg\|~\bigg\|\big(H^\omega_{\Lambda_L}-\bar{z}_L\big)^{-1}e_k\bigg\|\Bigg\}\\
&\leq \frac{1}{(2L+1)^d}\Bigg(2N_L^d \bigg(\frac{2L+1}{N_L}\bigg)^{d-1}\ln L\big|\text{Im}~z_L\big|^{-1}\\
&~~~~~~~~~~~+N_L^d\bigg(\frac{2L+1}{N_L}\bigg)^{d-1}\ln L \big|\text{Im}~z_L\big|^{-2}\Bigg)\\
&=2(2L+1)^{-1}N_L(\ln L) (2L+1)^\gamma \text{Im} ~z\\
&~~~~~~~~~~~+(2L+1)^{-1}N_L(\ln L) (2L+1)^{2\gamma} \text{Im} ~z\\
&=(2L+1)^{-(1-\gamma-\epsilon)}(\ln L)\text{Im} ~z+L^{-(1-2\gamma-\epsilon)}(\ln L)\text{Im} ~z\\
&=(2L+1)^{-(1-2\gamma-\epsilon)}(\ln L)\text{Im} ~z\big((2L+1)^{-\gamma}+1\big).
\end{split}
\end{equation}
In the above we have used the fact that $N_L=(2L+1)^\epsilon$ and we choose $0<\epsilon<1-2\gamma$.  Since $0<\gamma<\frac{1}{2}$, the convergence in (\ref{equi-con}) is immediate.
\end{proof}
\noindent The same calculation done in Lemma \ref{v-con} will also show that the vague limit points of  $\mathbb{E}\big(\mu^\omega_{L,E}(\cdot)\big)$ and $\mathbb{E}\big(\eta_{L,E}(\cdot)\big)$ are the same.
\begin{cor}
\label{v-con-avg}
For any bounded interval $I\subset \mathbb{R}$ we have 
\begin{equation}
\label{int}
\lim_{L\to\infty}\mathbb{E}\big(\mu^\omega_{E,L}(I)\big)=\lim_{L\to\infty}\mathbb{E}\big(\eta^\omega_{E,L}(I)\big).
\end{equation}
\end{cor}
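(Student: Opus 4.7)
\textbf{Proof plan for Corollary \ref{v-con-avg}.} The strategy is to deduce the averaged statement from the deterministic estimate already inside Lemma~\ref{v-con}, and then upgrade the class of test functions from the resolvents $\phi_z$ to characteristic functions of bounded intervals. The crucial observation is that the bound obtained in (\ref{dif-2}) is a deterministic function of $L$, $\gamma$, $\epsilon$ and $z$ alone, and does not depend on $\omega$. Consequently, by linearity of expectation and Fubini (justified because the Wegner estimate (\ref{wm}) gives a finite first moment on any compact set), the same bound controls
\[
\left|\int_\mathbb{R} \phi_z(x)\, d\mathbb{E}\big(\mu^\omega_{E,L}\big)(x) - \int_\mathbb{R} \phi_z(x)\, d\mathbb{E}\big(\eta^\omega_{E,L}\big)(x)\right|,
\]
so this quantity tends to $0$ as $L\to\infty$ for every $z$ with $\text{Im}\,z>0$.

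Next I would pass from $\phi_z$ to general $f\in C_c(\mathbb{R})$. The linear span of $\{\phi_z : \text{Im}\,z>0\}$ is dense in $C_0(\mathbb{R})$ by the Stone--Weierstrass theorem (this is precisely the reference \cite{CFKS} cited in Lemma~\ref{v-con}), and the Wegner estimate (\ref{wm}) provides a uniform-in-$L$ upper bound $\mathbb{E}\big(\mu^\omega_{E,L}(K)\big),\mathbb{E}\big(\eta^\omega_{E,L}(K)\big)\leq C_K$ on any compact $K\subset\mathbb{R}$. A standard tightness/approximation argument then extends the convergence of the previous display to every $f\in C_c(\mathbb{R})$.

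Finally, to reach the indicator $\mathbf{1}_I$ of a bounded interval $I=[a,b]$, I would sandwich $\mathbf{1}_I$ between continuous compactly supported functions $f^-_\delta\le \mathbf{1}_I\le f^+_\delta$ coinciding with $\mathbf{1}_I$ outside a $\delta$-neighbourhood of the endpoints $\{a,b\}$. The Wegner estimate, applied to the rescaled measures, shows that each of $\mathbb{E}\big(\mu^\omega_{E,L}\big)$ and $\mathbb{E}\big(\eta^\omega_{E,L}\big)$ is absolutely continuous with density bounded on compact sets uniformly in $L$ (the density is essentially $\rho_L$ composed with the rescaling, and $\rho_L$ is uniformly bounded by Wegner together with Proposition~\ref{uni-lo}). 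Hence $\int(f^+_\delta-f^-_\delta)\,d\mathbb{E}\big(\mu^\omega_{E,L}\big)\le C\delta$, and similarly for $\eta$. Letting $L\to\infty$ first and then $\delta\to 0$ squeezes the difference $\mathbb{E}\big(\mu^\omega_{E,L}(I)\big)-\mathbb{E}\big(\eta^\omega_{E,L}(I)\big)$ to zero.

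The main obstacle is this final step: upgrading from continuous test functions to indicators of intervals requires a uniform-in-$L$ density bound for the expected rescaled measures. It is essential that the Wegner constant does not blow up under the rescaling by $(2L+1)^\gamma$, which in turn uses the fact that the Cauchy component of the SSD renders the density of states function of $H^\omega$ bounded; without this uniform bound the sandwich argument would not close, and only vague convergence rather than equality of interval masses could be concluded.
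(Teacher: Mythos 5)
Your proposal is correct and follows essentially the same route as the paper: the key observation in both is that the bound (\ref{dif-2}) is deterministic (independent of $\omega$), so it survives taking expectations, giving the resolvent convergence (\ref{equi-con}) for the averaged measures, after which the Wegner estimate upgrades vague convergence to convergence of interval masses. The only cosmetic difference is in that last upgrade: the paper invokes absolute continuity of the limit points together with \cite[Lemma 4.1]{KO}, whereas you run the equivalent sandwich argument by hand using the uniform-in-$L$ density bound $\mathbb{E}\big(\mu^\omega_{E,L}(J)\big)\leq C|J|$ (which indeed holds since $\beta+\gamma=d$).
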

\begin{proof}
Using the estimate (\ref{dif-2}), it can be easily shown that (\ref{equi-con}) is also true for the deterministic measures 
$\mathbb{E}\big(\mu^\omega_{E,L}(\cdot)\big)$ and $\mathbb{E}\big(\eta^\omega_{E,L}(\cdot)\big)$. So both these sequence of measures have the same limit points, in the sense of vague convergence. Now the Wegner estimate (\ref{wm}) will ensure that the every limit points of $\mathbb{E}\big(\mu^\omega_{E,L}(\cdot)\big)$ is absolutely continuous, therefore we have the (\ref{int}), we refer to \cite[Lemma 4.1] {KO} for more details.
\end{proof}
\noindent  The next result shows that the limit points of the sequence of random measure 
$\{\eta^\omega_{E, L}(\cdot)\}_L$ is deterministic. Later we will compute the exact limit explicitly. 
\begin{lem}
\label{LLN}
Under the assumption of the Theorem \ref{eig-sta}, for any bounded interval $I\subset \mathbb{R}$ we have
\begin{equation}
\label{lln}
\lim_{L\to\infty}\eta^\omega_{E, L}(I)=\lim_{L\to\infty}\mathbb{E}\bigg(\eta^\omega_{E, L}(I)\bigg)~~a.e~\omega.
\end{equation}
\end{lem}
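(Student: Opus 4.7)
The plan is to exploit the independence built into the superposition $\eta^\omega_{E,L}=\sum_{p=1}^{N_L^d}\eta^\omega_{E,p}$ and prove a second moment/Chebyshev/Borel--Cantelli law of large numbers. Because the cubes $\{C_p\}$ are pairwise disjoint, the operators $\{H^\omega_{C_p}\}_p$ involve disjoint families of i.i.d.\ random variables, hence the random variables $\{\eta^\omega_{E,p}(I)\}_p$ are mutually independent. Therefore
\[
\mathrm{Var}\bigl(\eta^\omega_{E,L}(I)\bigr)=\sum_{p=1}^{N_L^d}\mathrm{Var}\bigl(\eta^\omega_{E,p}(I)\bigr).
\]
Write $N_p=\mathrm{Tr}\,E_{H^\omega_{C_p}}(I_L)$, where $I_L=E+(2L+1)^{-\gamma}I$, so that $\eta^\omega_{E,p}(I)=(2L+1)^{-\beta}N_p$. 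Since $\mathrm{Var}(N_p)\le \mathbb{E}(N_p(N_p-1))+\mathbb{E}(N_p)$, the Minami and Wegner bounds in (\ref{wm}) applied on each $C_p$ give
\[
\mathrm{Var}(N_p)\le C^2|C_p|^2|I_L|^2+C|C_p||I_L|.
\]

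Next I would plug in $|C_p|=(2L+1)^d/N_L^d$, $|I_L|=|I|(2L+1)^{-\gamma}$ and $\beta=d-\gamma$; after summing the $N_L^d$ terms one gets
\[
\mathrm{Var}\bigl(\eta^\omega_{E,L}(I)\bigr)\le \frac{C^2|I|^2}{N_L^d}+\frac{C|I|}{(2L+1)^{\beta}}
=\frac{C^2|I|^2}{(2L+1)^{\epsilon d}}+\frac{C|I|}{(2L+1)^{d-\gamma}}.
\]
Chebyshev then gives, for each fixed $\delta>0$,
\[
\mathbb{P}\!\left(\bigl|\eta^\omega_{E,L}(I)-\mathbb{E}\bigl(\eta^\omega_{E,L}(I)\bigr)\bigr|>\delta\right)\le \frac{1}{\delta^{2}}\!\left(\frac{C^2|I|^2}{(2L+1)^{\epsilon d}}+\frac{C|I|}{(2L+1)^{d-\gamma}}\right).
\]

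To invoke Borel--Cantelli along integer $L$, I need both exponents to exceed $1$. Since $d\ge 2$ and $d-\gamma>1$ automatically, the only binding requirement is $\epsilon d>1$; together with the constraint $\epsilon<1-2\gamma$ inherited from Lemma \ref{v-con}, this is consistent precisely when $1/d<1-2\gamma$, that is, $\gamma<(d-1)/(2d)$, which is exactly the hypothesis of Theorem \ref{eig-sta}. Choosing any $\epsilon\in(1/d,\,1-2\gamma)$ makes the bound summable in $L$, so the Borel--Cantelli lemma yields the almost sure convergence (\ref{lln}) for that particular $I$. Letting $\delta$ range over a countable dense set finishes the argument.

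The main obstacle is a bookkeeping one rather than a conceptual one: choosing the exponent $\epsilon$ in $N_L=(2L+1)^\epsilon$ so that the Minami-driven variance bound is summable in $L$ \emph{and} the boundary error from Lemma \ref{v-con} still vanishes. The constraint $1/d<\epsilon<1-2\gamma$ is exactly why the hypothesis $d\ge 2$ and $\gamma<(d-1)/(2d)$ appears; without the Minami estimate one would only get the weaker bound $\sum_p\mathbb{E}(N_p^2)\le \sum_p (\mathbb{E}N_p)^2 \cdot (\text{something}) $ that would not close the argument.
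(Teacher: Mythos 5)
Your proposal is correct and follows essentially the same route as the paper: independence of the $\eta^\omega_{E,p}(I)$ across disjoint cubes, a second-moment bound in which the diagonal part is controlled by Wegner and the off-diagonal part $\mathbb{E}(N_p(N_p-1))$ by Minami, and then Chebyshev plus Borel--Cantelli with the exponent window $1/d<\epsilon<1-2\gamma$ forced by $\gamma<(d-1)/(2d)$ and $d\ge 2$. The only cosmetic difference is that you invoke $\mathrm{Var}(N_p)\le\mathbb{E}(N_p(N_p-1))+\mathbb{E}(N_p)$ directly, whereas the paper expands the square via indicator functions of individual eigenvalues before arriving at the same two terms.
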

\begin{proof}
An application of Borel-Cantelli lemma will give (\ref{lln}), once  we able to show that for any $a>0$,
\begin{equation}
\label{borel-cantelli}
\sum_{L=1}^\infty\mathbb{P}\bigg(\omega:\big| \eta^\omega_{E,L}(I)- \bar{\eta}_{E,L}(I)\big| >a \bigg)<\infty,~~~\bar{\eta}_{E,L}(I)=\mathbb{E}\big(\eta^\omega_{E,L}(I)\big).
\end{equation}
We use Markov's inequality to write
\begin{equation}
\label{markov}
\sum_{L=1}^\infty\mathbb{P}\bigg(\omega:\big| \eta^\omega_{E,L}(I)- \bar{\eta}_{E,L}(I)\big| >a\bigg)\leq
\frac{1}{a^2}\sum_{L=1}^\infty\mathbb{E}\Bigg(\bigg(\eta^\omega_{E,L}(I)- \bar{\eta}_{E,L}(I)\bigg)^2\Bigg).
\end{equation}
Let $\{E_{j}^p\}_j$ be the set of all eigenvalues of $H^\omega_{C_p}$ and 
$X_{p,j}(\omega)=\chi_{_{A_{p,j}}}(\omega)$, denote the characteristic function of the set $A_{p,j}$ and the set is defined by 
$$A_{p,j}=\{\omega:E_j^p\in I_L\},~~I_L=E+(2L+1)^{-\gamma} I.$$
The measure $\eta^\omega_{E,p}(\cdot)$, defined in (\ref{sm-box}) can be written as
\begin{align}
\label{super-cha}
\eta^\omega_{E,p}(I) &=\frac{1}{(2L+1)^\beta} \sum_j X_{p,j}(\omega)~~\text{and}~~\eta^\omega_{E,L}(I)=\sum_p \eta^\omega_{E,p}(I).
\end{align}
Let $\bar{\eta}_{E,p}(I)$ denote the expectation of the random variable $\eta^\omega_{E,L}(I)$. Since the sequence random variables $\{\eta^\omega_{E,p}(I)\}_p$ are independent, we rewrite the r.h.s of (\ref{markov}) as
\begin{equation}
\label{s-ind}
\begin{split}
&\mathbb{E}\Bigg(\bigg(\eta^\omega_{E,L}(I)- \bar{\eta}_{E,L}(I)\bigg)^2\Bigg)\\
&=\sum_p\mathbb{E}\Bigg(\bigg(\eta^\omega_{E,p}(I)-\bar{\eta}_{E,p}(I)\bigg)^2\Bigg)\\
&\leq \sum_p\mathbb{E}\bigg( \eta^\omega_{E,p}(I)\bigg)^2\\
&=\frac{1}{(2L+1)^{2\beta}}\sum_p\Bigg(\sum_j\mathbb{E}\big(X_{p,j}(\omega)  \big)\\
&~~~~~~~~~~+2\sum_{n<m}\mathbb{E}  \bigg(X_{p,n}(\omega)X_{p,m}(\omega)\bigg) \Bigg)\\
&=\frac{1}{(2L+1)^{\beta}}\mathbb{E}\bigg(\eta^\omega_{E,L}(I)\bigg)\\
&~~~~~~~~~~~~+\frac{2}{(2L+1)^{2\beta}}\sum_p\sum_{n<m}\mathbb{E}  \bigg(X_{p,n}(\omega)X_{p,m}(\omega)\bigg).
\end{split}
\end{equation}
We observe the identity
\begin{equation}
\label{use-minami}
2\sum_{n<m} \bigg(X_{p,n}(\omega)X_{p,m}(\omega)\bigg)
=\sum_{j\ge 2}j(j-1)\chi_{_{\{\omega:Tr(H^\omega_{C_p}(I_L))=j\}}}(\omega).
\end{equation}
Now we use the Minami estimate (\ref{wm}) to bound the second term of the r.h.s of (\ref{s-ind}) as
\begin{equation}
\label{u-m}
\begin{split}
&\frac{2}{(2L+1)^{2\beta}}\sum_p\sum_{n<m}\mathbb{E}  \bigg(X_{p,n}(\omega)X_{p,m}(\omega)\bigg)\\
&=\frac{1}{(2L+1)^{2\beta}}\sum_p \mathbb{E}\bigg(Tr\bigg(E_{H^\omega_{_{C_P}}}(I_L)\bigg)\bigg(Tr\bigg(E_{H^\omega_{_{C_p}}}(I_L)\bigg)-1\bigg)\bigg)\\
&\leq \frac{C^2}{(2L+1)^{2\beta}}N_L^d\bigg(\frac{2L+1}{N_L}\bigg)^{2d}(2L+1)^{-2\gamma}|I|^2\\
&=C^2N_L^{-d}|I|^2,~~~~\beta+\gamma=d\\
&=C^2(2L+1)^{-d\epsilon}|I|^2,~~~~N_L=(2L+1)^\epsilon.
\end{split}
\end{equation}
The first term of the r.h.s of (\ref{s-ind}) can be estimate by the Wegner estimate (\ref{s-ind}) with help of the definitions in (\ref{sm-box}) and (\ref{super-pos})
\begin{align}
\label{u-w}
\frac{1}{(2L+1)^{\beta}}\mathbb{E}\bigg(\eta^\omega_{E,L}(I)\bigg)
&\leq\frac{C}{(2L+1)^{2\beta}}N_L^d\bigg(\frac{2L+1}{N_L}\bigg)^d(2L+1)^{-\gamma}|I|\nonumber\\
&=C(2L+1)^{-\beta}|I|,~~~~~\beta+\gamma=d.
\end{align}
We use (\ref{u-w}) and (\ref{u-m}) in (\ref{s-ind}) to get
\begin{equation}
\label{finite}
\mathbb{E}\Bigg(\bigg(\eta^\omega_{E,L}(I)- \bar{\eta}_{E,L}(I)\bigg)^2\Bigg)\leq C(2L+1)^{-\beta}|I|+
C^2(2L+1)^{-d\epsilon}|I|^2.
\end{equation}
From assumption of the Theorem \ref{eig-sta}, we have $0<\gamma<\frac{d-1}{2d}$ , therefore we can choose $\frac{1}{d}<\epsilon<1-2\gamma$ and $\beta=d-\gamma>1$ as $d\ge 2$. Now we substitute (\ref{finite}) in (\ref{markov}) to get (\ref{borel-cantelli}).
\end{proof}
\begin{rem}
We want to make a note that the proofs of Lemma \ref{v-con}, Corollary \ref{v-con-avg} and Lemma \ref{LLN} will also work for any single site distribution (SSD), provided we have the Wegner and Minami estimate as in the form (\ref{wm}).
\end{rem}
\noindent In view of the Lemma \ref{LLN}, Corollary \ref{v-con-avg}  and Lemma \ref{v-con} to prove the Theorem \ref{eig-sta}, we only need to find the limit of the sequence of deterministic measures
 $\big\{\mathbb{E}\big(\mu^\omega_{E, L}(\cdot)\big)\big\}_L$.\\~\\
{\bf Proof of the Theorem \ref{eig-sta}.} The definition of $\rho_L$, the local density of states function ($\ell$DOSf) of $H^\omega$ given in (\ref{ldosm}) together with the equivalence of convergence given in the Corollary \ref{v-con-avg} will give the limit. Now we compute, for any bounded interval $I\subset \mathbb{R}$
\begin{align}
\label{inten}
\lim_{L\to\infty}\mathbb{E}\big(\eta^\omega_{E, L}(I)\big)
&=\lim_{L\to\infty}\mathbb{E}\big(\mu^\omega_{E, L}(I)\big)\nonumber\\
&=\lim_{L\to\infty}\frac{(2L+1)^d}{(2L+1)^\beta}\nu_L(I_L),~~~I_L=E+(2L+1)^{-\gamma}I \nonumber\\
&=\lim_{L\to\infty}(2L+1)^\gamma\int_{I_L}\rho_L(x) dx,~~~\gamma+\beta=d\nonumber\\
&=\lim_{L\to\infty}\int_I\rho_L\bigg(E+(2L+1)^{-\gamma} x\bigg)dx\nonumber\\
&=\rho(E)|I|.
\end{align}
In the last line of the above we have used the uniform convergence of $\rho_L$ to $\rho$ as $L\to\infty$, as it is shown in the Proposition \ref{uni-lo}.\\
Now using the Lemma \ref{LLN} we get, for any bounded interval $I\subset\mathbb{R}$
$$\lim_{L\to\infty} \eta^\omega_{E,L}(I)=\rho(E) |I|~~a.e~\omega.$$
The above limit is enough \big(\cite[Lemma 4.1]{KO}\big) to claim that the sequence of measures $\{\eta^\omega_{E,L}(\cdot)\}_L$ converges vaguely to the measure $\rho(E)\mathcal{L}(\cdot)~a.e~\omega$, here $\mathcal{L}(\cdot)$ denote the Lebesgue measure on $\mathbb{R}$. But in the Lemma \ref{v-con}, it has already been proved that the vague limit points of the measures
$\{\eta^\omega_{E,L}(\cdot)\}_L$ and $\{\mu^\omega_{E,L}(\cdot)\}_L$ are the same a.e $\omega$, hence the theorem. 
\qed\\~\\
Now the corollary will easily follow from the properties of vague convergence of measure.\\~\\
{\bf Proof of Corollary \ref{cap-dos}:}  Since the sequence of measure $\big\{\mu^\omega_{E,L}(\cdot)\big\}_L$ converges vaguely to  $\rho(E)\mathcal{L}(\cdot)$,  $\mathcal{L}(\cdot)$ denote the Lebesgue measure on $\mathbb{R}$, then (\ref{-dos-cap}) will follow from the fact that
\begin{equation*}
\mu^\omega_{E,L}\big([a,b]\big)\xrightarrow{L\to\infty} \rho(E)(b-a)~a.e~\omega,~~\forall~a<b.
\end{equation*}
\qed
 
\appendix
\section{Appendix}
Although it is evident from the weak convergence of  
$\nu_L(\cdot)$ ($\ell$DOSm) to $\nu(\cdot)$ (DOSm) that $\hat{\rho}_L(t)$, the Fourier transform of $\ell$DOSf, converges pointwise to $\hat{\rho}(t)$, the Fourier transform DOSf . Here we can also give an explicit estimate of the difference $\big|\hat{\rho}_L(t)- \hat{\rho}(t) \big|$.
\begin{prop}
The Fourier transform of local density of states function ($\ell$DOSf) $\hat{\rho}_L(t)$ converges compact uniformly to 
$\hat{\rho}(t)$, the Fourier transform of the density of states function (DOSf) of $H^\omega$.
\end{prop}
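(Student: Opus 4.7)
By Theorem~\ref{thm1} and equation (\ref{cal-ft}), the two Fourier transforms share a common Cauchy factor:
\begin{equation*}
\hat\rho_L(t)=e^{-\lambda|t|}\hat\nu_{2,L}(t), \qquad \hat\rho(t)=e^{-\lambda|t|}\hat\nu_2(t),
\end{equation*}
where $\nu_{2,L}$ is the $\ell$DOSm and $\nu_2$ the DOSm of $h^{\omega_2}$. It therefore suffices to estimate $|\hat\nu_{2,L}(t)-\hat\nu_2(t)|$ with uniform decay on compact sets of $t$. Using the translation invariance of the joint law of $\{\omega_{2,n}\}_n$, the quantity $\mathbb{E}\langle e_n, e^{-ith^{\omega_2}}e_n\rangle$ does not depend on $n$, and I can rewrite the difference as
\begin{equation*}
\hat\nu_{2,L}(t)-\hat\nu_2(t)=\frac{1}{(2L+1)^d}\sum_{n\in\Lambda_L}\mathbb{E}\Big(\big\langle e_n,(e^{-ith^{\omega_2}_L}-e^{-ith^{\omega_2}})e_n\big\rangle\Big),
\end{equation*}
viewing $h^{\omega_2}_L$ as an operator on $\ell^2(\mathbb{Z}^d)$ by extension by zero outside $\Lambda_L$.

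The next step is Duhamel's formula. Restricted to $\ell^2(\Lambda_L)$, the difference $h^{\omega_2}-h^{\omega_2}_L$ reduces to the boundary Laplacian $(1-\chi_L)\Delta\chi_L$, since the diagonal potential $V^{\omega_2}$ commutes with the cutoff $\chi_L$; this operator has norm $\le 2d$ and is supported on sites adjacent to $\partial\Lambda_L$. After Cauchy--Schwarz and using unitarity of both evolutions, each summand satisfies
\begin{equation*}
\Big|\big\langle e_n,(e^{-ith^{\omega_2}}-e^{-ith^{\omega_2}_L})e_n\big\rangle\Big| \le 2d\int_0^{|t|}\big\|\chi_{\partial^{\mathrm{in}}\Lambda_L}\,e^{-ish^{\omega_2}_L}e_n\big\|\,ds,
\end{equation*}
which reduces everything to controlling how much of the evolved state $e^{-ish^{\omega_2}_L}e_n$ reaches $\partial\Lambda_L$. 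I would then partition $\Lambda_L$ into an inner boundary layer $\{n:\mathrm{dist}(n,\partial\Lambda_L)\le R\}$ of cardinality $O(RL^{d-1})$, on which I apply the trivial bound $\|\cdot\|\le 1$ (contributing $O(R|t|/L)$ after volume normalization), and the interior, on which a Lieb--Robinson / finite-propagation-speed estimate for the tight-binding Schr\"odinger evolution yields $\|\chi_{\partial\Lambda_L}e^{-ish^{\omega_2}_L}e_n\| \le C(e|s|/R)^R$ whenever $R>e|s|$, which is super-polynomially small.

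Balancing both regimes by taking $R$ of order $|t|+\ln L$ produces an explicit bound of the form
\begin{equation*}
|\hat\rho_L(t)-\hat\rho(t)| \le C\,e^{-\lambda|t|}\,\frac{(1+|t|)(1+|t|+\ln L)}{L},
\end{equation*}
from which compact uniform convergence on any bounded interval of $t$ is immediate. The main technical obstacle will be establishing the Lieb--Robinson bound when the random potential $V^{\omega_2}$ may be unbounded. The resolution is that $V^{\omega_2}$ is diagonal in the standard basis, hence commutes with every coordinate projection and does not spread the support of any vector; all propagation is driven by the bounded Laplacian $\Delta$ with speed at most $2d$. This is made rigorous by passing to the interaction picture via the diagonal unitary $e^{isV^{\omega_2}}$ and applying the Dyson expansion in $\Delta$: the $k$-th term in the expansion is supported on walks of length at most $k$ in $\mathbb{Z}^d$, and a standard combinatorial count combined with Stirling's formula yields the factor $(e|s|/R)^R$ independently of $V^{\omega_2}$.
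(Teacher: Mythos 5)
Your argument is correct, but it takes a genuinely different (and heavier) route than the paper's. After Duhamel, you bound each diagonal matrix element $\big|\big\langle e_n,(e^{-ith^{\omega_2}}-e^{-ith^{\omega_2}_L})e_n\big\rangle\big|$ \emph{individually}, which forces you to control how much of $e^{-ish^{\omega_2}_L}e_n$ reaches $\partial\Lambda_L$; that is why you need the boundary-layer decomposition and the Lieb--Robinson / Dyson-expansion estimate in the interaction picture (your treatment of the unbounded diagonal potential there is sound, since $e^{isV^{\omega_2}}$ preserves supports and the expansion is in the bounded operator $\Delta$). The paper avoids all of this: it keeps the sum over $n\in\Lambda_L$ intact, writes the Duhamel term as $\sum_{n}\sum_{(k,m)}\int_0^t\langle\delta_k,e^{-isH^\omega_L}\delta_n\rangle\,\langle\delta_n,e^{-i(s-t)H^\omega}\delta_m\rangle\,ds$ over boundary bonds $(k,m)$, and applies Cauchy--Schwarz \emph{in the site index} $n$, using unitarity in the form $\sum_{n\in\Lambda_L}|\langle\delta_k,e^{-isH^\omega_L}\delta_n\rangle|^2\le\|e^{isH^\omega_L}\delta_k\|^2=1$. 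The whole estimate then collapses to counting the $O(L^{d-1})$ boundary bonds, giving the clean bound $|\hat\rho(t)-\hat\rho_L(t)|\le 2d|t|/(2L+1)$ with no propagation estimate, no choice of $R$, and no assumption whatsoever on the single site distribution (the paper's remark makes this last point explicit). Your preliminary factorization of the common Cauchy factor $e^{-\lambda|t|}$ via Theorem~\ref{thm1} is valid but unnecessary for the same reason, and your final bound carries an extra $\ln L$ that the paper's does not; what your approach buys in exchange is a quantitative statement about where the evolved states live, which the problem does not actually require.
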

\begin{proof}
We use the spectral theorem of self-adjoint operator to write
\begin{align}
\label{1}
\widehat{\rho}(t)-\widehat{\rho}_L(t) &=\int_{\mathbb{R}} e^{-itx}d\nu(x)-\int_{\mathbb{R}} e^{-itx}d\nu_L(x)\nonumber\\
&=\frac{1}{(2L+1)^d}\displaystyle\sum_{n\in\Lambda_L}\mathbb{E}\bigg(\bigg\langle e_n, \bigg(e^{-itH^\omega}-e^{-itH^\omega_L}\bigg)e_n\bigg\rangle\bigg).
\end{align}
Applying Duhamel's formula, we obtain
\begin{equation}
\label{2}
\bigg(e^{-itH^\omega}-e^{-itH^\omega_L}\bigg)e_n=-\int_0^t e^{-i(s-t)H^\omega} \big(H^\omega-H^\omega_L \big)e^{-isH^\omega_L}e_n~ds
\end{equation}
Now the expression in the r.h.s of (\ref{1}) can be written as
\begin{equation}
\label{3}
\begin{split}
&\sum_{n\in\Lambda_L}\bigg\langle \delta_n, \bigg(e^{-itH^\omega}-e^{-itH^\omega_L}\bigg)\delta_n\bigg\rangle \\
&=\sum_{n\in\Lambda_L}   \sum_{\substack{k\in\partial\Lambda_L \\m\notin \Lambda_L\\|m-k|=1\\}}
\int_0^t\bigg\langle \delta_k, e^{-isH^\omega_L}\delta_n\bigg\rangle \bigg\langle \delta_n, e^{-i(s-t)H^\omega}\delta_m\bigg\rangle ds.
\end{split}
\end{equation}
Using the Cauchy-Schwarz inequality (on sum over $n$) in above, we get
\begin{equation}
\label{4}
\begin{split}
&\bigg|\sum_{n\in\Lambda_L}\bigg\langle \delta_n, \bigg(e^{-itH^\omega}-e^{-itH^\omega_L}\bigg)\delta_n\bigg\rangle \bigg|\\
&\leq\sum_{\substack{k\in\partial\Lambda_L \\m\notin \Lambda_L\\|m-k|=1\\}}
\int_0^t\Bigg\{\Bigg(\sum_{n\in\Lambda_L}\bigg|\bigg\langle \delta_k, e^{-isH^\omega_L}\delta_n\bigg\rangle\bigg|^2 \Bigg)^{\frac{1}{2}}\\
&~~~~~~~~~~~~~~~\times \Bigg(\sum_{n\in\Lambda_L}\bigg|\bigg\langle \delta_k, e^{-i(s-t)H^\omega}\delta_n\bigg\rangle\bigg|^2 \Bigg)^{\frac{1}{2}}\Bigg\}ds\\
&\leq  |t| \sum_{\substack{k\in\partial\Lambda_L \\m\notin \Lambda_L\\|m-k|=1\\}}(1)\\
&=|t|~2d(2L+1)^{d-1}.
\end{split}
\end{equation}
Using (\ref{4}), (\ref{3}) and (\ref{2}) in (\ref{1}) we obtain the compact uniform convergence of $\widehat{\rho}_L(t)$ to $\widehat{\rho}(t)$, as we have
\begin{equation}
\label{5}
\bigg|\widehat{\rho}(t)-\widehat{\rho}_L(t)\bigg|\leq 2d \frac{|t|}{(2L+1)}~~\forall~t\in\mathbb{R}.
\end{equation}
\end{proof}
\begin{rem}
To prove the above Lemma, we did not assume any condition on $\mu$, the single site distribution (SSD) of $H^\omega$. Therefore estimate (\ref{5}) is valid for any probability measure $\mu$ which act as a SSD.
\end{rem}



\begin{thebibliography}{99} 
\bibitem{Kir} Kirsch. W: An Invitation to Random Schr\"{o}dinger Operators, (With an appendix by Fr\'{e}d\'{e}ric Klopp) Panor. Synth\`{e}ses, {\bf 25}, Random Schr\"{o}dinger operators, {\bf 1}, Soc. Math. France, Paris, 1$-$119, 2008.

\bibitem{CL} Carmona. C,  Lacroix. J: \textsl{Spectral Theory of Random Schr\"{o}dinger Operators}, Birkh\"{a}user, Boston, 1990.

\bibitem{CFKS}  Cycon. H. L, Froese. R. G, Kirsch. W, Simon. B: \textsl{Schr\"{o}dinger operators, with application to quantum mechanics and global geometry}, Texts and Monographs in Physics, Springer-Verlag, Berlin, 1987.

\bibitem{DKM}  Dolai. D, Krishna. M, Mallick. A: \textsl{Regularity of Density of States of Random Schr\"{o}dinger Operators}, Commun. Math. Phys, {\bf 378(1)}, 299$-$328, 2020.

\bibitem{Na} Nakano. F, \textsl{Level statistics for one-dimensional Schr\"{o}dinger operators and gaussian beta ensemble}, J. Stat. Phys. {\bf 156}, 66$-$93, 2014.

\bibitem{ST} Simon. B, Taylor. M: \textsl{Harmonic analysis on $SL(2, R)$ and smoothness of the density of states in the
one-dimensional Anderson model}. Commun. Math. Phys. {\bf 101(1)}, 1$-$19, 1985.


\bibitem{L} P. Lloyd: \textsl{ Exactly solvable model of electronic states in a three dimensional disordered hamiltonian: non existence of localized states},  J. Physics (C), {\bf 2}, 1717$-$1725, 1969.

 \bibitem{BW}  Breuer. J, Weissman. D: \textsl{Level repulsion for Schr\"{o}dinger operators with singular continuous spectrum}, J. Spect. Theory, {\bf 9}, 429$-$451, 2019.



\bibitem{JFA} Combes. J M, Germinet. F, Klein. A: \textsl{Generalized Eigenvalue-Counting Estimates for the Anderson Model},  J. Stat. Phys. {\bf 135(2)}, 201$-$216, 2009.

\bibitem{AK} Acosta. V, Klein. A:\textsl{ Analyticity of the density of states in the Anderson model on the Bethe lattice}, J. Stat. Phys. {\bf 69(1-2)}, 277$-$305, 1992.

\bibitem {AW} Aizenman. M, Warzel. S: \textsl{The canopy graph and level statistics for random operators on trees}, Mathematical Physics, Analysis and Geometry. {\bf 9(4)}, 291$-$333, 2006.

\bibitem{Ge} Geisinger. L:  \textsl{Poisson eigenvalue statistics for random Schr\"{o}dinger operators on regular graphs}, Annales Henri Poincar\'{e}. {\bf 16}, 1779$-$1806, 2015.

\bibitem{GK} Germinet. F,  Klopp. F: \textsl{Spectral statistics for random Schrödinger operators in the localized regime}, J. Eur. Math. Soc. (JEMS) {\bf 16(9)}, 1967$-$2031, 2014.

\bibitem{DM} Mallick. A,  Dolai. D: \textsl {Spectral statistics for one-dimensional Anderson model with unbounded but decaying potential}, Infin. Dimens. Anal. Quantum Probab. Relat. Top. {\bf 22(2)}, 1950012, 14 pp, 2019.

\bibitem{mol} Molchanov. S.A, \textsl{The local structure of the spectrum of the one-dimensional Schrödinger operator} Comm. Math. Phys. {\bf 78}, 429$-$446, 1981.

\bibitem{CFS} Constantinescu. F, Fr\"{o}hlich. J, Spencer, T: {Analyticity of the density of states and replica method for
random Schr\"{o}dinger operators on a lattice}, J. Stat. Phys. {\bf 34(3$-$4)}, 571$-$596, 1984.

\bibitem{KKS} Kaminaga, M, Krishna. M, Nakamura, S: \textsl{A note on the analyticity of density of states}, J. Stat. Phys. {\bf149(3)}, 496$-$504, 2012.

\bibitem{KKH} Kirsch. W, Krishna. M, Hislop. P: \textsl{Eigenvalue Statistics for Schr\"{o}dinger Operators with random point interactions on $\mathbb{R}^d, ~d=1,2,3$}, Journal of Mathematical Physics. {\bf 61},  92$-$103, 2020.

\bibitem{BCKP} Bovier. A, Campanino. M, Klein. A, Perez. J.F: \textsl{Smoothness of the density of states in the Anderson
model at high disorder}, Commun. Math. Phys. {\bf 114(3)}, 439$-$461, 1988.

\bibitem{BH} Bellissard. J,  Hislop. P: \textsl{Smoothness of Correlations in the Anderson Model
at high disorder},  Ann. Henri Poincar\'{e} {\bf 8}, 1$-$26, 2007.

\bibitem{KK} Kirsch. W, Krishna. M: \textsl{Analyticity of the density of states for the Cauchy distribution}, Preprint, 2020.

\bibitem{CK} Campanino. M, Klein. A: \textsl{A supersymmetric transfer matrix and differentiability of the density of states
in the one-dimensional Anderson model}, Commun. Math. Phys. {\bf104(2)}, 227$-$241, 1986 .

\bibitem{S} Speis. A: \textsl{Weak disorder expansions for the Anderson model ona one-dimensional strip at the center of
the band}. Commun. Math. Phys. {\bf 149(3)}, 549$-$571, 1992.

\bibitem {KS} Klein. A, Speis. A.: \textsl{Smoothness of the density of states in the Anderson model on a one-dimensional strip}, Ann. Phys. {\bf 183(2)}, 
352$-$398, 1988.

\bibitem{KS1} Klein. A, Speis, A: \textsl{Regularity of the invariant measure and of the density of states in the one-dimensional Anderson model},  J. Funct. Anal. {\bf 88(1)}, 211$-$227, 1990.

\bibitem{KLS} Klein. A, Lacroix. J, Speis. A: \textsl{Regularity of the density of states in the Anderson model on a strip for potentials with singular continuous distributions}. J. Stat. Phys. {\bf 57(1$-$2)}, 65$-$88, 1989.

\bibitem{G} Glaffig. C: \textsl{Smoothness of the integrated density of states on strips}, J. Funct. Anal. {\bf 92(2)}, 509$-$534, 1990.

\bibitem{ESY}  Erd\H{o}s. L,  Schlein. B, Yau. H-T: \textsl{Wegner Estimate and Level Repulsion for Wigner Random Matrices}, IMRN. {\bf 2010(3)}, 436$-$479, 2010.

\bibitem{DE} Dietlein. A, Elgart. A: \textsl {Level spacing and Poisson statistics for continuum random Schr\"{o}dinger operators},  J. Eur. Math. Soc. (JEMS), {\bf 23(4)}, 1257$-$1293, 2021.

\bibitem{Klopp} Klopp. F: \textsl {Asymptotic ergodicity of the eigenvalues of random operators in the localized phase}, Probab. Theory Related Fields. {\bf155 (3-4)}, 867$-$909, 2013.

\bibitem{Minami} Minami N: \textsl { Local fluctuation of the spectrum of a multidimensional Anderson tight binding model}. Commun. Math. Phys. {\bf 177(3)} 709$-$725, 1996.

\bibitem{DK}  Dolai. D, Krishna. M: \textsl { Level Repulsion for a class of decaying random potentials}, Markov Process. Related Fields. {\bf 21(3)}, 449$-$462, 2015.

\bibitem{MS} March. P, Sznitman. A:\textsl{ Some connections between excursion theory and the discrete Schrödinger equation with random potentials}, Probab. Theory Relat. Fields. {\bf 75(1)}, 11$-$53, 1987.

\bibitem{ALS} Avila. A, Last. Y, Simon. B : \textsl{Bulk universality and clock spacing of zeros for ergodic Jacobi matrices with absolutely continuous spectrum}, Anal. PDE {\bf 3}, 81$-$108, 2010.

\bibitem{KVV} Kritchevski. E, B. Valk\'{o}, B. Vir\'{a}g: \textsl{The scaling limit of the critical one-dimensional random Schr\"{o}dinger operator}, Commun. Math. Phys. {\bf 314}, 775–806, 2012.

\bibitem {KN} Kotani. S, Nakano. F: \textsl{Poisson statistics for 1d Schr\"{o}dinger operators with random decaying potentials}, Electron. J. Probab. {\bf 22} ,
1$-$31, 2017.

\bibitem{KScmv} Killip. R, Stoiciu. M: \textsl{ Eigenvalue statistics for CMV matrices: From poisson to clock via random matrix ensembles}, Duke Math. J. {\bf 146} , 361$-$399, 2009.

\bibitem{Ko} Kotani. S: \textsl{ On limit behavior of eigenvalues spacing for 1$-$d random Schr\"{o}dinger  operators}, K\^{o}ky\^{u}roku Bessatsu. {\bf B27}, 67$-$78, 2011.

\bibitem{RS} Reed. M, Simon. B. \textsl{Methods of modern mathematical
physics: Functional analysis, volume 1},  Academic Press Inc. 1980.

\bibitem{KO} Kallenberg. O: \textsl{Random Measures, Theory and Applications},  Probability Theory and Stochastic Modelling, Vol. 77,  Springer-Verlag, New York, 2017.

\end{thebibliography}
\end{document}